\documentclass[11pt,leqno]{amsart}
\usepackage{verbatim,amssymb,amsfonts,latexsym,amsmath,amsthm,bbm,nicefrac,xspace,enumerate,tikz}


\newtheorem{theorem}{Theorem}[section]

\newtheorem{lemma}[theorem]{Lemma}

\newtheorem{question}[theorem]{Question}

\newtheorem*{question*}{Question}
\theoremstyle{definition}

\theoremstyle{remark}
\newtheorem{remark}[theorem]{Remark}
\overfullrule=5 pt


\newcommand{\R}{\mathbb{R}}


\newcommand{\B}{\mathcal{B}}
\newcommand{\C}{\mathcal{C}}

\newcommand{\E}{\mathcal{E}}
\newcommand{\F}{\mathcal{F}}
\newcommand{\G}{\mathcal{G}}

\newcommand{\I}{\mathcal{I}}
\newcommand{\J}{\mathcal{J}}

\newcommand{\M}{\mathcal{M}}
\renewcommand{\P}{\mathcal{P}}

\newcommand{\explicitSet}[1]{\left\lbrace #1 \right\rbrace}
\newcommand{\brackets}[1]{\left\langle #1 \right\rangle}
\newcommand{\set}[2]{\explicitSet{#1 \colon #2}}
\newcommand{\seq}[2]{\brackets{#1 \colon #2}}
\newcommand{\<}{\langle}
\renewcommand{\>}{\rangle}
\renewcommand{\a}{\alpha}

\newcommand{\dlt}{\delta}

\renewcommand{\k}{\kappa}
\newcommand{\s}{\sigma}
\renewcommand{\t}{\tau}
\newcommand{\w}{\omega}
\newcommand{\0}{\emptyset}

\newcommand{\sub}{\subseteq}
\newcommand{\rest}{\!\restriction\!}
\newcommand{\cat}{\!\,^{\frown}}

\newcommand{\homeo}{\approx}


\newcommand{\closure}[1]{\overline{#1}}






\newcommand{\cf}{\mathrm{cf}}

\newcommand{\tr}[1]{[\![#1]\!]}


\newcommand{\BB}{\mathbb{B}}

\newcommand{\continuum}{\mathfrak{c}}

\newcommand{\bld}[1]{\ensuremath{\mathrm {#1}}}
\newcommand{\scr}[1]{\ensuremath{\mathcal {#1}}}
\newcommand{\add}[1]{\ensuremath{\bld{add}(\scr{#1})}}
\newcommand{\non}[1]{\ensuremath{\bld{non}(\scr{#1})}}
\newcommand{\cov}[1]{\ensuremath{\bld{cov}(\scr{#1})}}
\newcommand{\cof}[1]{\ensuremath{\bld{cof}(\scr{#1})}}

\newcommand{\gch}{\ensuremath{\mathsf{GCH}}\xspace}
\newcommand{\zfc}{\ensuremath{\mathsf{ZFC}}\xspace}


\newcommand{\nwd}{\text{\scalebox{.9}{$\mathcal{NWD}$}}}
\newcommand{\HIT}{\scalebox{1}{{\small H}{\scriptsize IT}} }
\newcommand{\MISS}{\scalebox{1}{{\small M}{\scriptsize ISS}} }


\begin{document}

\title[Cardinal invariants of a meager ideal]{Cardinal invariants of a meager ideal}
\author{Will Brian}
\address {
W. R. Brian\\
Department of Mathematics and Statistics\\
University of North Carolina at Charlotte\\
9201 University City Blvd.\\
Charlotte, NC 28223, USA}
\email{wbrian.math@gmail.com}
\urladdr{wrbrian.wordpress.com}


\subjclass[2010]{subjects}
\keywords{key words}

\thanks{The author is supported by NSF grant DMS-2154229.}

\begin{abstract}
Let $\M_X$ denote the ideal of meager subsets of a topological space $X$. 
We prove that if $X$ is a completely metrizable space without isolated points, then the smallest cardinality of a non-meager subset of $X$, denoted $\mathrm{non}(\mathcal M_X)$, is exactly $\mathrm{non}(\mathcal M_X) = \mathrm{cf}[\kappa]^\omega \cdot \mathrm{non}(\mathcal M_\R)$, where $\kappa$ is the minimum weight of a nonempty open subset of $X$. 
We also characterize the additivity and covering numbers for $\M_X$ in terms of simple topological properties of $X$. Some bounds are proved and some questions raised concerning the cofinality of $\M_X$ and the cofinality of the related ideal of nowhere dense subsets of $X$.
 
We also show that if $X$ is a compact Hausdorff space with $\pi$-weight $\kappa$, then $\mathrm{non}(\mathcal M_X) \leq \mathrm{cf}[\kappa]^\omega \cdot \mathrm{non}(\mathcal M_\R)$. This bound for compact Hausdorff spaces is not sharp, in the sense that it is consistent for such a space to have non-meager subsets of even smaller cardinality. 
\end{abstract}

\maketitle


\section{Introduction}

An \emph{ideal} on a set $X$ is a collection $\I$ of subsets of $X$ such that
\begin{enumerate}
\item every finite subset of $X$ is in $\I$, 
\item if $A \in \I$ and $B \sub A$ then $B \in \I$, 
\item if $\J \sub \I$ and $\J$ is finite, then $\bigcup \J \in \I$.
\end{enumerate}
An ideal is \emph{proper} if $\I \neq \P(X)$ or, equivalently, if $X \notin \I$. An ideal $\I$ is called a \emph{$\s$-ideal} if it satisfies the following strengthening of $(3)$:
\begin{itemize}
\item[$(3')$] if $\J \sub \I$ and $\J$ is countable, then $\bigcup \J \in \I$.
\end{itemize}


\noindent Four cardinal numbers are naturally associated to every ideal:

\begin{itemize}
\item[$\circ$] $\mathrm{add}(\I) = \min \{ |\J| :\, \J \sub \I \text{ and } \bigcup \J \notin \I \}$.
\item[$\circ$] $\mathrm{cov}(\I) = \min \{ |\J| :\, \J \sub \I \text{ and } \bigcup \J = X \}$.
\item[$\circ$] $\mathrm{non}(\I) = \min \{ |Y| :\, Y \sub X \text{ and } Y \notin \I \}$.
\item[$\circ$] $\mathrm{cof}(\I) = \min \{ |\J| :\, \J \sub \I \text{ and } \forall A \in \I \,\, \exists B \in \J \,\, B \supseteq A \}$.
\end{itemize}

\noindent These four cardinals are known, respectively, as the \emph{additivity}, the \emph{covering number}, the \emph{uniformity number}, and the \emph{cofinality} of $\I$.
If $\I$ is a proper $\s$-ideal, then these four cardinals satisfy the inequalities displayed below.
\vspace{1mm}

\begin{center}
\begin{tikzpicture}[scale=1]

\node at (-1.8,0) {$\aleph_1$};
\node at (5.8,0) {$|\I|$};
\node at (7.7,0) {$|\mathcal P(X)|$};
\node at (0,0) {$\mathrm{add}(\mathcal I)$};
\node at (2,1) {$\mathrm{non}(\mathcal I)$};
\node at (2,-1) {$\mathrm{cov}(\mathcal I)$};
\node at (4,0) {$\mathrm{cof}(\mathcal I)$};

\node at (-1.1,0) {$\leq$};
\node at (5,0) {$\leq$};
\node at (6.6,0) {$\leq$};
\node at (.9,.55) {\rotatebox{30}{$\leq$}};
\node at (.9,-.55) {\rotatebox{-30}{$\leq$}};
\node at (3.1,.55) {\rotatebox{-30}{$\leq$}};
\node at (3.1,-.55) {\rotatebox{30}{$\leq$}};

\end{tikzpicture}
\end{center}

\vspace{2mm}

If $X$ is a $T_1$ space with no isolated points, then 
$$\M_X = \set{A \sub X}{A \text{ is meager in }X}$$ 
is a $\s$-ideal on $X$. (If $X$ has an isolated point, then $\M_X$ fails requirement $(1)$ in the definition of an ideal.) 
This is a proper $\s$-ideal if and only if $X$ is not a countable union of meager sets. A \emph{Baire space} is a space satisfying the conclusion of the Baire Category Theorem, and $\M_X$ is a $\s$-ideal for any Baire space $X$. 
The two best known classes of Baire spaces are the completely metrizable spaces and the (locally) compact Hausdorff spaces. Here we focus primarily on the first of these two classes, but we also prove a result about compact Hausdorff spaces in Section~\ref{sec:compacta}.

If $X$ is a Polish space with no isolated points, then $\M_X$ is isomorphic to $\M_\R$: in fact, there is a bijection $f: X \to \R$ such that both $f$ and $f^{-1}$ send meager sets to meager sets. 
It follows that the four cardinal numbers associated to $\M_X$ and $\M_\R$ are the same. 
Henceforth, we use $\M$ (with no subscript) to denote $\M_\R$ or, equivalently (for the purposes of this investigation), $\M_X$ for any perfect Polish space $X$. 

The four cardinal invariants associated to $\M$ are classical cardinal characteristics of the continuum. Their values are between $\aleph_1$ and $\continuum$, and they satisfy the inequalities indicated in the diagram above. 
Furthermore, no inequalities other than these are provable in \zfc: it is consistent that $\non M < \cov M$ (e.g. in the Cohen model) and that $\cov M < \non M$ (e.g. in the random model). 
The ideal $\M$, and its associated cardinal invariants, are fairly well understood when $X$ is Polish.

In this paper we consider the ideal $\M_X$ of meager subsets of a not-necessarily-Polish space $X$. 
In order to avoid trivialities, we always require $X$ to have no isolated points and to be either completely metrizable or compact Hausdorff.
Our motivating question is:

\begin{question*}
Given a space $X$ with no isolated points, can the four cardinals $\mathrm{add}(\mathcal M_X)$, $\mathrm{cov}(\mathcal M_X)$, $\mathrm{non}(\mathcal M_X)$, and $\mathrm{cof}(\mathcal M_X)$ be determined in some simple way from basic topological properties of $X$?
\end{question*}

The majority of the paper is devoted to determining the exact values of three of these four cardinals when $X$ is completely metrizable. The main results along these lines are: if $\k$ is the least cardinal number such that a nonempty open subset of $X$ has weight $\k$, then

\begin{align*}
\mathrm{add}(\mathcal M_X) &\,=\, \begin{cases}
\add M &\text{ if $X$ has a $\pi$-base consisting}\\
&\text{ of second countable open sets,} \\
\aleph_1 &\text{ if not,}
\end{cases} \\
\mathrm{cov}(\mathcal M_X) \vphantom{f^{f^{f^{f^{f^{f^{f^2}}}}}}} &\,=\, \begin{cases}
\cov M &\text{ if } \k = \aleph_0, \\
\aleph_1 &\text{ if } \k > \aleph_0,
\end{cases} \\
\mathrm{non}(\mathcal M_X) \vphantom{f^{f^{f^{f^2}}}} &\,=\, \cf[\k]^\w \cdot \non M.
\end{align*}

\vspace{1mm}

The first two equalities are proved in Section~\ref{sec:covering}. 
The third equality, characterizing $\mathrm{non}(\M_X)$, is proved in Section~\ref{sec:uniformity}. Interestingly, our proof of this last equality relies on a theorem of Shelah from pcf theory. 
We provide an example suggesting that such an approach may be necessary. 

In Section~\ref{sec:cofinality} we prove several inequalities concerning both $\mathrm{cof}(\M_X)$ and the cofinality of the related ideal $\nwd_X$ of nowhere dense subsets of $X$:
$$\k^+,\cf[\k]^\w,\cof M \,\leq\, \mathrm{cof}(\M_X) \,\leq\, \mathrm{cof}(\nwd_X) \,\leq\, 2^\k$$
when $X$ is the completely metrizable space $D^\w$, where $D$ is the discrete space of size $\k$. 
We also include some still-unresolved questions concerning $\mathrm{cof}(\mathcal M_X)$ and $\mathrm{cof}(\nwd_X)$. 

Finally, in Section~\ref{sec:compacta} we discuss (locally) compact Hausdorff spaces. The main result of this section is an upper bound for $\mathrm{non}(\M_X)$  similar to the bound for completely metrizable spaces. Specifically, if $X$ is a locally compact Hausdorff space with no isolated points, and $\k$ is the minimum $\pi$-weight of a nonempty open subset of $X$, then
$$\mathrm{non}(\mathcal M_X) \,\leq\, \cf[\k]^\w \cdot \non M.$$
The proof of this theorem uses our result concerning completely metrizable spaces, and thus also relies indirectly on Shelah's theorem from pcf theory.

\section{Additivity and covering for a completely metrizable space}\label{sec:covering}

Recall that the \emph{weight} of a space $X$, denoted $w(X)$, is the minimum cardinality of a basis for $X$. Let 
$$w_\downarrow(X) \,=\, \min\set{\k}{\text{some nonempty open $U \sub X$ has weight $\k$}}.$$
We say that $X$ has \emph{uniform weight $\k$} if $w(X) = w_\downarrow(X) = \k$, or, equivalently, if every nonempty open subset of $X$ has weight $\k$. 

A \emph{cellular family} in a space $X$ is a collection of nonempty pairwise disjoint open subsets of $X$. The \emph{cellularity} of $X$, denoted $c(X)$, is 
$$c(X) \,=\, \sup \set{ |\mathcal S| }{ \mathcal S \text{ is a cellular family in } X }.$$
By a theorem of Erd\H{o}s and Tarski in \cite{ET}, if $X$ is any space and $c(X)$ is not a regular limit cardinal,
then there is a cellular family in $X$ of cardinality $c(X)$: in other words, the supremum in the definition of $c(X)$ is attained. 
Furthermore, if $X$ is metrizable then the supremum in the definition of $c(X)$ is attained regardless of
whether $c(X)$ is a regular limit cardinal or not. 

In what follows, if a cardinal number $\k$ is referred to as a topological space, assume that $\k$ carries the discrete topology (not its usual order topology). 
So, for example, $\k$ is the discrete space of size $\k$, and $\k^\w$ is a completely metrizable space with uniform weight $\k$. The latter is sometimes called the \emph{Baire space of weight $\k$} (e.g. in \cite{Engelking}). 

\begin{lemma}\label{lem:kw}
Suppose $X$ is a completely metrizable space with uniform weight $\k$. Then $X$ has a co-meager subset homeomorphic to $\k^\w$. 
\end{lemma}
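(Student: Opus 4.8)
The plan is to build the co-meager copy of $\k^\w$ inside $X$ by a standard fusion/tree construction, producing a scheme of nonempty open sets indexed by the finite sequences from $\k$ whose intersections over branches give the desired homeomorphic copy. First I would fix a complete metric $d$ on $X$. The key point enabling the construction is that \emph{every} nonempty open subset of $X$ has weight exactly $\k$ (uniform weight), hence by the metrizable case of the Erd\H{o}s--Tarski theorem quoted above, every nonempty open $U \sub X$ admits a cellular family of size $\k$ consisting of sets of arbitrarily small diameter; indeed in a metric space of weight $\k$ one can find a cellular family of size $\k$ with all members of diameter $< \e$, since weight equals density for metric spaces and a dense set of size $\k$ forces cellularity $\k$ at every scale.

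The construction proceeds by recursion on the length of $s \in \k^{<\w}$. I would build open sets $U_s$ (for $s \in \k^{<\w}$) such that: (i) $U_\emp = X$; (ii) for each $s$, the family $\set{\closure{U_{s\cat\<\a\>}}}{\a < \k}$ is a cellular family with $\closure{U_{s\cat\<\a\>}} \sub U_s$ and $\mathrm{diam}(U_{s\cat\<\a\>}) < 2^{-|s|}$; and (iii) $\bigcup_{\a<\k} U_{s\cat\<\a\>}$ is dense in $U_s$ — this last clause is exactly what will make the resulting set co-meager. To arrange (iii), at stage $s$ I enumerate a basis for $U_s$ and, processing it in order, repeatedly extract from each basic open set a further nonempty open subset of small diameter whose closure is disjoint from all previously chosen ones; since $w(U_s) = \k$ and the space is metrizable, $\k$-many such choices suffice to meet every basic open subset of $U_s$, giving a cellular family of size $\k$ that is also dense in $U_s$. (When $\k = \aleph_0$ one recovers the usual construction of a co-meager copy of $\w^\w$ in any perfect Polish space.)

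For each $x \in \k^\w$, completeness of $d$ together with the shrinking diameters guarantees that $\bigcap_{n\in\w} U_{x\rest n}$ is a single point, call it $h(x)$; the map $h \colon \k^\w \to X$ is injective because distinct branches eventually pass through disjoint $U_s$'s, it is continuous because $h[N_s] \sub U_s$ for the basic clopen set $N_s = \set{x \in \k^\w}{s \sub x}$, and it is an open map onto its image because $h[N_s] = Z \cap U_s$ where $Z = h[\k^\w]$, so $h$ is a homeomorphism onto $Z$. Finally, $Z$ is co-meager in $X$: writing $F_n = X \setminus \bigcup\set{U_s}{|s| = n}$, each $F_n$ is closed, and $F_n$ is nowhere dense because clause (iii) (applied inductively down the tree) makes $\bigcup\set{U_s}{|s|=n}$ dense in $X$; since $Z = \bigcap_n \bigcup\set{U_s}{|s|=n} = X \setminus \bigcup_n F_n$, the complement of $Z$ is meager.

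The main obstacle is clause (iii): ensuring simultaneously that the level-$n$ open sets form a cellular family of controlled diameter \emph{and} have dense union. This is where uniform weight is essential — if some open $U \sub X$ had weight strictly less than $w(X)$, a cellular family inside $U$ meeting every basic open subset of $U$ would still have size only $w(U) < \k$, so one could not index the immediate successors by all of $\k$ while keeping the union dense. I should double-check that the metrizable strengthening of Erd\H{o}s--Tarski gives not just \emph{a} maximal cellular family of size $\k$ but one that can be taken with arbitrarily small diameters and, crucially, dense union; this follows by transfinite recursion of length $\k$ through a fixed basis of $U_s$, using that at each stage fewer than $\k$ sets have been removed so some basic open set is still available, but it is worth stating carefully as it is the engine of the whole argument.
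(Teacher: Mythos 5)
Your overall architecture---a Luzin scheme $\seq{U_s}{s \in \k^{<\w}}$ with $\closure{U_{s\cat\a}} \sub U_s$, siblings with pairwise disjoint closures, vanishing diameters, and $\bigcup_{\a<\k}U_{s\cat\a}$ dense in $U_s$---is sound, and it is a genuinely different route from the paper, which instead takes a $\s$-discrete basis (Bing), deletes all boundaries to obtain a co-meager $G_\delta$ set, and identifies that set with $\k^\w$ by quoting Stone's characterization theorem. The gap is exactly in the step you flag as the engine: the claim that the greedy transfinite recursion through a basis of $U_s$ yields \emph{exactly} $\k$ cells with dense union. Your justification (``at each stage fewer than $\k$ sets have been removed so some basic open set is still available'') addresses the wrong obstacle. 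The danger is not running out of basic sets; it is that the union of the closures of the fewer than $\k$ cells chosen so far may already be dense in $U_s$, after which no further cell with disjoint closure can be added anywhere, and the recursion halts with fewer than $\k$ successors---while your homeomorphism argument needs exactly $\k$ successors at every node. This can genuinely happen: for uncountable $\k$, let $V_n$ ($n\in\w$) be pairwise disjoint copies of $\k^\w$ with metrics rescaled so that $\mathrm{diam}(V_n)\leq 2^{-n-1}$, and let $X=\{p\}\cup\bigcup_{n\in\w}V_n$, metrized by $d(x,p)=2^{-n}$ for $x\in V_n$ and $d(x,y)=\max(2^{-m},2^{-n})$ for $x\in V_m$, $y\in V_n$, $m\neq n$. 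This $X$ is completely metrizable with uniform weight $\k$, each $V_n$ is clopen of small diameter, and $\set{V_n}{n\in\w}$ is a \emph{countable} maximal cellular family with dense union (its complement is $\{p\}$). If your enumeration of a basis lists the $V_n$ first and your choice function picks $V_n$ itself (or a dense open proper subset of $V_n$) at stage $n$, then after $\w$ steps every nonempty open set meets the union of the chosen closures, so no later choice is possible and the root acquires only countably many successors. So a family of fewer than $\k$ small cells with disjoint closures and dense union is a real possibility, and the recursion as you describe it does not prove what you need.

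The statement you need is nonetheless true, and the repair is short, so the approach is salvageable. In $U_s$ first fix a cellular family of size $\k$ (using $c(U_s)=w(U_s)=\k$ and the fact, quoted in Section~\ref{sec:covering}, that the supremum defining cellularity is attained in metrizable spaces), and shrink each member to a nonempty open set of diameter $<\e$ whose closure lies inside that member; this seeds $\k$ cells with pairwise disjoint closures contained in $U_s$. Then extend this family by Zorn's Lemma to one that is maximal among families of nonempty open sets of diameter $<\e$ whose closures are pairwise disjoint and contained in $U_s$. Maximality forces the union of the closures---and hence of the cells themselves, since the cells are open---to be dense in $U_s$, and the cardinality is exactly $\k$: at least $\k$ by the seeding, at most $c(U_s)=\k$ by disjointness. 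With that lemma in place, the rest of your argument (well-definedness of $h$ from completeness and shrinking diameters, injectivity and openness onto the image from disjointness together with $\closure{U_{s\cat\a}}\sub U_s$, and co-meagerness of $Z=\bigcap_{n\in\w}\bigcup_{|s|=n}U_s$) is correct, and you obtain a self-contained proof that avoids Stone's theorem.
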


\begin{proof}
Suppose $\k = w(X) = w_\downarrow(X)$. 
By the Bing Metrization theorem, $X$ has a $\s$-discrete basis. 
That is, there is a sequence $\B_0,\B_1,\B_2,\dots$ of countable collections of open subsets of $X$ such that $\B = \bigcup_{n \in \w}\B_n$ is a basis for $X$ and each $\B_n$ is discrete, which means that every point of $X$ has a neighborhood meeting at most one member of $\B_n$. 
Let 
$$Y \,=\, X \setminus \textstyle \bigcup \set{\closure{U} \setminus U}{U \in \B}.$$
In other words, $Y$ is obtained from $X$ by removing the boundary of every member of $\B$. 
We claim that $Y$ is co-meager in $X$ and homeomorphic to $\k^\w$. 

To see that $Y$ is co-meager in $X$, note that $\bigcup \set{\closure{U} \setminus U}{U \in \B_n}$ is closed and nowhere dense in $X$ for each $n \in \w$. (This follows easily from the fact that $\B_n$ is discrete.) Hence $\bigcup \set{\closure{U} \setminus U}{U \in \B} = \bigcup_{n \in \w} \bigcup \set{\closure{U} \setminus U}{U \in \B_n}$ is a meager $F_\s$ set, and $Y$ is a co-meager $G_\dlt$ subset of $X$.

To see that $Y$ is homeomorphic to $\k^\w$, we appeal to a theorem of Stone from \cite{Stone}, which states: 
\emph{A completely metrizable space is homeomorphic to $\k^\w$ if and only if it is strongly $0$-dimensional and has uniform weight $\k$.} 
(This generalizes to higher weights the famous characterization of the Baire space $\w^\w$ by Alexandrov and Urysohn; see \cite[Exercise 7.2.G]{Kechris}.)
The fact that $Y$ is completely metrizable follows from the fact that it is a $G_\dlt$ subset of the completely metrizable space $X$. (Recall that, by a theorem of Alexandrov, a subspace of a completely metrizable space is completely metrizable if and only if it is $G_\dlt$.)
The fact that $Y$ is strongly $0$-dimensional follows from \cite[Theorem 7.3.8]{Engelking}. 
The fact that $Y$ has uniform weight $\k$ follows from the fact that $X$ does. 
Specifically, for every nonempty open $U \sub Y$ there is a nonempty open $W \sub X$ with $U = W \cap Y$. But $U$ is dense in $W$ (because $Y$ is co-meager), and in any $T_3$ space, the weight of a dense subspace is equal to the weight of the original space. Thus $w(U) = w(W) = \k$. 
\end{proof}

Recall that a \emph{$\pi$-base} for $X$ is a collection $\B$ of nonempty open sets such that every nonempty open subset of $X$ contains a member of $\B$. 

\begin{lemma}\label{lem:CellularFamilies}
If $X$ is a metric space and $\B$ is a $\pi$-base for $X$, then there is a maximal cellular family $\mathcal S$ in $X$ such that $\mathcal S \sub \B$ and $|\mathcal S| = w(X)$.
\end{lemma}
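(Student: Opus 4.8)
The plan is to build $\mathcal S$ by a transfinite greedy construction, picking members of $\B$ one at a time to be disjoint from everything chosen so far, and then argue separately that (a) the process produces a \emph{maximal} cellular family, and (b) this family has cardinality exactly $w(X)$. Maximality is automatic: if we always continue as long as some member of $\B$ is disjoint from $\bigcup \mathcal S$, then when we stop, every nonempty open set meets $\bigcup \mathcal S$ (since every nonempty open set contains a member of $\B$, as $\B$ is a $\pi$-base), so $\mathcal S$ is maximal among cellular families in $X$. The only real content is the cardinality claim.

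For the cardinality, I would argue $|\mathcal S| \le w(X)$ and $|\mathcal S| \ge w(X)$ separately. The upper bound is the easy half: a cellular family consists of pairwise disjoint nonempty open sets, and in a space of weight $\k = w(X)$ one can choose a basic open set inside each member of the family, giving an injection from $\mathcal S$ into a basis; hence $|\mathcal S| \le \k$. (Equivalently, $c(X) \le w(X)$ always, and in a metric space $c(X)$ equals the weight when the weight is infinite — but I would just give the one-line injection argument rather than invoke cellularity.) The lower bound is where the hypothesis that $X$ is metric, and that $\mathcal S$ is \emph{maximal}, gets used: $\bigcup \mathcal S$ is dense and open in $X$, so it has the same weight as $X$, namely $\k$; and a disjoint open union $\bigcup \mathcal S$ in a metric space has weight equal to $|\mathcal S| \cdot \sup_{S \in \mathcal S} w(S)$, while each $w(S) \le \k$. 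If $|\mathcal S| < \k$ this would force some $w(S) = \k$ — which is not yet a contradiction — so I need to be a little more careful: instead, observe that if $|\mathcal S|$ were strictly smaller than $\k = w(X)$, then since $\B \subseteq $ (some basis) and $|\mathcal S| < \k$, there must be a nonempty open $U \subseteq X$ with $w(U)$ large, but by maximality $U$ meets some $S \in \mathcal S$, and then $U \cap S$ is a nonempty open subset of $S$...

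Let me restructure the lower bound more cleanly, since that is the step I expect to be the main obstacle. The clean approach: among the members of $\mathcal S$, some $S_0$ has $w(S_0) = w_\downarrow(X) =: \lambda$, the minimum weight of a nonempty open set; and actually every nonempty open subset of $X$ contains a nonempty open set of weight $\lambda$. So without loss of generality I can refine: replace each $S \in \mathcal S$ by a subset in $\B$... but $\B$ need not contain low-weight sets. An alternative, and I think the right one, is to first pass to a maximal cellular family $\mathcal S'$ \emph{refining} $\mathcal S$ and consisting of sets each of weight $\lambda$ — but that changes the family. Given the statement only asks for \emph{some} maximal cellular family inside $\B$ of size $w(X)$, the safest route is: fix a $\pi$-base $\B$; run the greedy construction; get a maximal $\mathcal S \subseteq \B$; then since $\bigcup \mathcal S$ is dense open and hence has weight $w(X) = \k$, and since in a metrizable space a topological sum of spaces $\{S : S \in \mathcal S\}$ has weight $\max(|\mathcal S|, \sup_S w(S))$, and each $w(S) \le \k$, we get $\max(|\mathcal S|, \sup_S w(S)) = \k$. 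This still leaves open that $|\mathcal S| < \k$ with $\sup_S w(S) = \k$. To rule that out, note we are free in the greedy step to \emph{always pick a member of $\B$ of minimum possible weight among those disjoint from $\bigcup \mathcal S$}; combined with the fact that every nonempty open set in a metric space of weight $\ge \lambda$ contains open subsets of arbitrarily small weight down to $\lambda$... hmm, that is false in general ($\R$ is a counterexample trivially, but there $\lambda = \aleph_0 = w$).

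Given the risk of circularity, I will instead prove the lemma by reducing to the uniform-weight case via Lemma~\ref{lem:kw}-style ideas, or simply cite the classical fact $c(X) = w(X)$ for infinite-weight metric spaces and combine it with the density of $\bigcup\mathcal S$: \textbf{(1)} run the greedy construction to get maximal $\mathcal S \subseteq \B$; \textbf{(2)} $\bigcup \mathcal S$ is dense open, so $w(\bigcup \mathcal S) = w(X)$; \textbf{(3)} since $\bigcup \mathcal S$ is metric and is the topological sum of the family $\mathcal S$, and for a metric topological sum $w(\bigsqcup_i Y_i) = |I| + \sup_i w(Y_i)$, and $w(\bigcup \mathcal S) = w(X) = \k$ while also $c(X) = \k$ forces (by choosing basic sets) $|\mathcal S| \le \k$; \textbf{(4)} for the reverse, if $|\mathcal S| < \k$ then $\sup_{S \in \mathcal S} w(S) = \k$, so some $S_1 \in \mathcal S$ has $w(S_1) = \k$; now apply the cellularity identity \emph{inside} $S_1$: $c(S_1) = w(S_1) = \k$, pick a cellular family $\mathcal T$ in $S_1$ of size $\k$ (attained since $S_1$ is metric), refine each member to lie in $\B$ using the $\pi$-base property, and replace $S_1$ in $\mathcal S$ by $\mathcal T$'s refinement; this enlarges $\mathcal S$ to size $\k$ while staying a cellular family inside $\B$, and one final greedy pass restores maximality without shrinking it. The main obstacle, then, is bookkeeping the identity $w(\bigsqcup_i Y_i) = |I| + \sup_i w(Y_i)$ for metric sums and the attainment of cellularity in metric spaces — both standard, the former from $\sigma$-discrete bases and the latter cited already in the text from \cite{ET} plus the metric addendum.
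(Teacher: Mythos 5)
Your overall scaffolding (greedy construction inside $\mathcal B$, then $w(\bigcup\mathcal S)=w(X)$ for the dense open union, then the metric sum formula $w=|\mathcal S|+\sup_{S\in\mathcal S}w(S)$, then repair if $|\mathcal S|<\kappa$) can be made to work, but the repair step as you finally state it contains a genuine gap: from $|\mathcal S|<\kappa$ and $\sup_{S\in\mathcal S}w(S)=\kappa$ you conclude that \emph{some} $S_1\in\mathcal S$ has $w(S_1)=\kappa$. That inference only holds when $\mathrm{cf}(\kappa)>|\mathcal S|$ (in particular for regular $\kappa$); it fails exactly at singular $\kappa$ of small cofinality, which is the case the paper cares most about. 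Concretely, let $X$ be the topological sum of the spaces $\aleph_n^\omega$ for $n<\omega$, so $\kappa=w(X)=\aleph_\omega$, let $\mathcal B$ be the $\pi$-base of all nonempty open sets, and note that your greedy construction may perfectly well output $\mathcal S=\{\aleph_n^\omega : n<\omega\}$: this is a maximal cellular family contained in $\mathcal B$, $|\mathcal S|=\aleph_0<\kappa$, $\sup_S w(S)=\aleph_\omega$, yet no member has weight $\aleph_\omega$, so your ``split the one big member $S_1$'' repair cannot even begin, and splitting it into only $w(S_1)<\kappa$ pieces leaves the family too small.

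The fix is to refine \emph{every} member rather than a single one: inside each $S\in\mathcal S$ take (Erd\H{o}s--Tarski plus the metric attainment fact, which the paper already cites) a cellular family of size $c(S)=w(S)$, shrink each of its members into $\mathcal B$, and make it maximal in $S$ by Zorn; the union over all $S$ then has size at least $\sup_S w(S)+|\mathcal S|=\kappa$ and at most $c(X)=w(X)=\kappa$, and is maximal in $X$ since it is dense in the dense open set $\bigcup\mathcal S$. Once you do this, though, you have essentially reproduced the paper's proof, which sidesteps the attainment problem from the start: it first takes (by Erd\H{o}s--Tarski with the metric addendum) a cellular family $\mathcal S_0$ of size $c(X)=w(X)$, and only then refines each $V\in\mathcal S_0$ to a maximal cellular subfamily $\mathcal T_V\subseteq\mathcal B$ of $V$ via Zorn, so the lower bound $|\mathcal S|\geq|\mathcal S_0|=w(X)$ is immediate and no case analysis on $\mathrm{cf}(\kappa)$ or on the sizes of individual members is ever needed.
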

\begin{proof}
By the aforementioned result of Erd\H{o}s and Tarski, there is a cellular family $\mathcal S_0$ in $X$ with $|\mathcal S_0| = c(X)$. But $w(X) = c(X)$ for metrizable spaces (see \cite[Theorem 4.1.15]{Engelking}), so $|\mathcal S_0| = w(X)$. 
For every $U \in \mathcal S_0$, a straightforward application of Zorn's Lemma gives a maximal subset $\mathcal T_V \sub \B$ such that $\bigcup \mathcal T_V \sub V$ and the members of $\mathcal T_V$ are pairwise disjoint. 
Because $\B$ is a $\pi$-base, each $\mathcal T_V$ is a maximal cellular family in $V$. But then $\mathcal S = \bigcup_{V \in \mathcal S_0}\mathcal T_V$ is a maximal cellular family in $X$. Furthermore, $c(X) = w(X) = |\mathcal S_0| \leq |\mathcal S| \leq c(X)$, so $|\mathcal S| = w(X)$.
\end{proof}

\begin{lemma}\label{lem:UniformCovering}
Let $X$ be a completely metrizable space with uniform weight $\k \geq \aleph_1$. Then $\mathrm{cov}(\M_X) = \aleph_1$.
\end{lemma}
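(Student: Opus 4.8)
The plan is to reduce to the case $X = \k^\w$ and then to show directly that $\k^\w$ is a union of $\aleph_1$ meager sets. By Lemma~\ref{lem:kw}, $X$ has a co-meager subset $Y$ homeomorphic to $\k^\w$. A subset of $Y$ is meager in $Y$ if and only if it is meager in $X$ (since $Y$ is co-meager, hence dense), and any meager-in-$X$ set that happens to lie in $X \setminus Y$ is automatically handled by adding the single meager set $X \setminus Y$ to our cover. So it suffices to cover $Y \homeo \k^\w$ by $\aleph_1$ sets meager in $\k^\w$; equivalently, to show $\cov(\M_{\k^\w}) \leq \aleph_1$ (the reverse inequality $\cov(\M_{\k^\w}) \geq \aleph_1$ holds because $\M_{\k^\w}$ is a proper $\s$-ideal).

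For the covering of $\k^\w$ itself, the key observation is that $\k \geq \aleph_1$ means $\k$ is an \emph{uncountable} discrete space, so already $\k^1 = \k$ fails to be separable, and more to the point, $\k$ decomposes into $\aleph_1$-many (indeed $\k$-many) points each of which is a clopen singleton. I would fix a partition or, better, for each $\a < \w_1$ consider the set
$$M_\a \,=\, \set{x \in \k^\w}{x(n) \neq \a \text{ for all but finitely many } n}.$$
Actually the cleaner approach: for a countable set $C \sub \k$, let $N_C = \set{x \in \k^\w}{x(n) \in C \text{ for infinitely many } n}$ — no; the right set to use is the complement. Let me instead take, for each $\a<\k$, the basic clopen-avoiding set $E_\a = \set{x\in\k^\w}{x(0)\neq \a}$; this is open dense, but we want meager sets covering the space, not co-meager ones. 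The correct construction uses the fact that for each fixed $n$, the map $x \mapsto x(n)$ has uncountable range: pick an injection of $\w_1$ into $\k$, and for $\a < \w_1$ set
$$F_\a \,=\, \set{x \in \k^\w}{x(n) = \a \text{ for some } n \in \w}.$$
Each $F_\a = \bigcup_n \set{x}{x(n)=\a}$ is an $F_\s$ set each of whose pieces is clopen with empty interior — wait, $\set{x}{x(n)=\a}$ is clopen and \emph{not} nowhere dense. So $F_\a$ is not meager. This is the crux of the difficulty, and it shows that the naive "coordinate" approach does not work directly.

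The main obstacle, then, is that $\k^\w$ with $\k$ discrete has many clopen sets, so producing genuinely meager sets requires a diagonal condition across infinitely many coordinates simultaneously. The fix I would pursue: fix an injection $\a \mapsto \xi_\a$ of $\w_1$ into $[\k]^{\aleph_0}$ with the $\xi_\a$ increasing and cofinal in $[\k']^{\aleph_0}$ for $\k' = \aleph_1$ (using $\cf[\aleph_1]^{\aleph_0} = \aleph_1$), and for each $\a$ let
$$G_\a \,=\, \set{x \in \k^\w}{\text{ran}(x) \sub \xi_\a}.$$
Then $G_\a$ is a closed subset of $\k^\w$ homeomorphic to $\xi_\a^\w \homeo \w^\w$, and it is nowhere dense in $\k^\w$ (any basic open set $[s]$ contains a point taking some value outside $\xi_\a$, since $\k \setminus \xi_\a \neq \emp$), hence meager. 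Every $x \in \k^\w$ with $\text{ran}(x)$ countable and contained in the fixed copy of $\aleph_1$ lies in some $G_\a$; to cover \emph{all} of $\k^\w$ one then has to move the countable "window" around inside $\k$, which multiplies the count by $\k$ rather than keeping it at $\aleph_1$. The resolution is to observe that we need only cover $Y$, and to choose the homeomorphism $Y \homeo \k^\w$ together with a single meager set absorbing the part of $X$ not reached; alternatively, and most cleanly, invoke that $\cov(\M_{\k^\w}) = \cov(\M_{\aleph_1^\w})$ because $\k^\w$ contains a closed copy of $\aleph_1^\w$ whose complement is meager — no. I would instead argue: $\k^\w \homeo \k^\w \times \w^\w$, fix a copy of $\w_1 \sub \k$, and for $\a < \w_1$ put $H_\a = \set{(x,y)}{x(0) < \xi_\a \text{ or } x(0) \notin \w_1} \cap \set{(x,y)}{y \text{ eventually } 0 \text{-ish}}$. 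The bookkeeping here is the part I expect to occupy most of the real work; the topological facts (closed copies of $\w^\w$ inside $\k^\w$ are nowhere dense, $F_\s$ with nowhere-dense pieces is meager, co-meager subsets preserve the meager ideal) are all routine once the right family of sets is identified.
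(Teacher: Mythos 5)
Your reduction to $\k^\w$ via Lemma~\ref{lem:kw} is exactly right and matches the paper, as is the observation that $\mathrm{cov}(\M_{\k^\w}) \geq \aleph_1$ since $\M_{\k^\w}$ is a proper $\s$-ideal. But the heart of the lemma --- actually exhibiting $\aleph_1$ meager sets that cover $\k^\w$ --- is never accomplished. Every candidate family you write down is either abandoned, not meager (the sets $\set{x}{x(n)=\a \text{ for some } n}$ are open), ill-defined (the final $H_\a$ with ``eventually $0$-ish''), or covers too little with too few sets: your $G_\a = \set{x}{\mathrm{ran}(x) \sub \xi_\a}$ are indeed closed nowhere dense, but to catch every $x \in \k^\w$ this way you would need the countable sets $\xi_\a$ to be cofinal in all of $[\k]^\w$, costing $\cf[\k]^\w$ sets rather than $\aleph_1$ --- an obstacle you correctly identify but do not overcome. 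So the proposal has a genuine gap: no valid $\aleph_1$-sized covering family is produced.

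The missing idea is a small but decisive inversion of the set you considered and rejected. Instead of $\set{x}{\a \in \mathrm{range}(x)}$, take its complement: for $\a < \w_1$ (viewing $\w_1 \sub \k$), let $F_\a = \set{x \in \k^\w}{\a \notin \mathrm{range}(x)}$. Each $F_\a$ is closed, since its complement is the union of the basic clopen sets $\tr{s}$ with $\a \in \mathrm{range}(s)$, and it is nowhere dense, since for any $s \in \k^{<\w}$ the set $\tr{s \cat \a}$ is a nonempty basic open subset of $\tr{s}$ disjoint from $F_\a$. The covering is then immediate from cardinality alone: every $x \in \k^\w$ has countable range, so $\mathrm{range}(x)$ cannot contain all of the uncountable set $\w_1$, hence $x \in F_\a$ for some $\a < \w_1$. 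Thus $\bigcup_{\a<\w_1} F_\a = \k^\w$, and combining with the meager set $X \setminus Y$ from Lemma~\ref{lem:kw} gives $\mathrm{cov}(\M_X) \leq \aleph_1$. Note the contrast with your $G_\a$ approach: demanding that the range \emph{avoid a single ordinal} needs only $\aleph_1$ conditions to defeat every countable range, whereas demanding that the range \emph{be contained in a prescribed countable set} needs a family cofinal in $[\k]^\w$.
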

\begin{proof}
First, we claim that $\mathrm{cov}(\M_{\k^\w}) = \aleph_1$ for uncountable $\k$. To see this, for each $\a < \k$ define 
$$F_\a \,=\, \set{x \in \k^\w}{\a \notin \mathrm{range}(x)}.$$
Here, as usual, we are thinking of $x$ as a function $\w \to \k$. 
Recall that $\k^\w$ has a basis of sets of the form
$$\tr{s} \,=\, \set{x \in \k^\w}{x \rest (\mathrm{length}(s) = s)}$$
where $s \in \k^{<\w}$. 
Each $F_\a$ is closed in $\k^\w$, because
$$\k^\w \setminus F_\a \,=\, \set{x \in \k^\w}{\a \in \mathrm{range}(x)} \,=\, \textstyle \bigcup \set{\tr{s}}{\a \in \mathrm{range}(s)}.$$
Furthermore, if $\tr{s}$ is any element of our basis for $\k^\w$, then $\tr{s \cat \a}$ is another set in the basis with $\tr{s \cat \a} \sub \tr{s}$ and $\tr{s \cat \a} \cap F_\a = \0$. Thus each $F_\a$ is a nowhere dense closed subset of $\k^\w$. 
Now observe that 
$$\textstyle \bigcup_{\a < \w_1} F_\a \,=\, \set{x \in \k^\w}{\mathrm{range}(x) \not\supseteq \w_1} \,=\, X,$$
where the second equality holds because $\mathrm{range}(x)$ is countable for every $x \in \k^\w$. 
Thus we have an $\aleph_1$-sized collection of nowhere dense subsets of $\k^\w$ whose union is all of $\k^\w$.

By Lemma~\ref{lem:kw}, there is a meager $F \sub X$ such that $X \setminus F \homeo \k^\w$. By the previous paragraph, $X \setminus F$ can be covered by $\aleph_1$ sets meager in $X \setminus F$, and therefore meager in $X$. Adding $F$ to this collection, we obtain a collection of $\aleph_1$ meager sets covering $X$.
\end{proof}

\begin{theorem}
If $X$ is a completely metrizable space with no isolated points,
$$\mathrm{cov}(\mathcal M_X) \,=\, \begin{cases}
\cov M &\text{ if $w_\downarrow(X) = \aleph_0$,} \\
\aleph_1 &\text{ if $w_\downarrow(X) > \aleph_0$}.
\end{cases}$$
\end{theorem}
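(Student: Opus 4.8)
The plan is to prove the two cases separately, with the harder direction being the reduction to the Polish case when $w_\downarrow(X) = \aleph_0$.

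For the case $w_\downarrow(X) > \aleph_0$: here every nonempty open subset of $X$ has uncountable weight, but this does not immediately mean $X$ has uniform weight, so Lemma~\ref{lem:UniformCovering} does not apply directly. However, I claim one can still extract a clopen (or at least open) subset of uniform uncountable weight. Using the Bing metrization theorem as in Lemma~\ref{lem:kw}, fix a $\sigma$-discrete basis $\B = \bigcup_n \B_n$ for $X$. For each nonempty open $U$, the collection of basic sets contained in $U$ is a basis for $U$, and $w(U) = c(U)$ since $U$ is metrizable. One shows that there must be some basic open $V \in \B$ all of whose nonempty open subsets have weight exactly $\lambda$ for a single uncountable $\lambda$ — concretely, let $\lambda = w_\downarrow(X)$ and pick $V$ with $w(V) = \lambda$; then every nonempty open $W \sub V$ has $w(W) \geq \lambda$ (since $w(W) \geq w_\downarrow(X) = \lambda$) and $w(W) \leq w(V) = \lambda$, so $V$ has uniform weight $\lambda \geq \aleph_1$. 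Apply Lemma~\ref{lem:UniformCovering} to $V$: there is an $\aleph_1$-sized family of sets meager in $V$ covering $V$. Since $V$ is open in $X$, sets meager in $V$ are meager in $X$. Finally $X \setminus V$ need not be meager, but $X \setminus V$ is closed; if its interior is empty we are done by adding $X \setminus V$ to the family, and otherwise we repeat the argument inside the interior and use that a metric space has a cellular family witnessing that $X$ is covered by $\leq w(X)$-many such open pieces — but a cleaner route is: since $\mathrm{add}(\M_X) \geq \aleph_1$ always, it suffices to cover $X$ by countably many meager sets plus one copy of this construction, or simply to note that we only need an \emph{upper} bound, and the reverse inequality $\mathrm{cov}(\M_X) \geq \aleph_1$ is automatic because $\M_X$ is a proper $\sigma$-ideal. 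The one subtlety to handle carefully is patching together the covers of the cellular pieces: take a maximal cellular family $\mathcal S$ of open sets each of uniform uncountable weight (obtainable via Lemma~\ref{lem:CellularFamilies} applied to a suitable $\pi$-base), cover each $V \in \mathcal S$ by $\aleph_1$ meager sets, take the union over $V$ of the $\alpha$-th sets for each $\alpha < \omega_1$ (this is a union of $\leq w(X)$ meager sets, hence meager), and add the closed nowhere dense set $X \setminus \bigcup \mathcal S$. This gives $\mathrm{cov}(\M_X) \leq \aleph_1$, and combined with $\mathrm{cov}(\M_X) \geq \aleph_1$ we get equality.

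For the case $w_\downarrow(X) = \aleph_0$: here some nonempty open $U \sub X$ is second countable, hence (being completely metrizable, as a $G_\delta$ — in fact open — subset of $X$) $U$ contains a dense-in-itself completely metrizable separable subspace, and by passing to a smaller open set we may assume $U$ has no isolated points, so $U$ is a perfect Polish space (or becomes one after removing its isolated points, a countable set). Then $\M_U \iso \M$, so $\mathrm{cov}(\M) = \mathrm{cov}(\M_U) \geq \mathrm{cov}(\M_X)$ — wait, the inequality goes the convenient way: any cover of $X$ by meager sets restricts to a cover of $U$ by sets meager in $U$, giving $\mathrm{cov}(\M_X) \geq \mathrm{cov}(\M_U) = \mathrm{cov}(\M)$. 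For the reverse, $\mathrm{cov}(\M_X) \leq \mathrm{cov}(\M)$: using Lemma~\ref{lem:CellularFamilies} with a $\pi$-base of second countable open sets (which exists because, by Bing's theorem and $w_\downarrow(X) = \aleph_0$, the second countable basic open sets form a $\pi$-base — every nonempty open set contains one of weight $\aleph_0$), obtain a maximal cellular family $\mathcal S$ of second countable open sets with $\bigcup \mathcal S$ dense. Each $V \in \mathcal S$, after deleting countably many isolated points, is a perfect Polish space, so can be covered by $\mathrm{cov}(\M)$ sets meager in $V$. Since $|\mathcal S| \leq c(X) = w(X)$ which may be large, I instead fix an enumeration of a single cover $\{M_\alpha : \alpha < \mathrm{cov}(\M)\}$ of a perfect Polish space and transport it into each $V$ via a meager-preserving bijection, then set $N_\alpha = \bigcup_{V \in \mathcal S} (\text{image of } M_\alpha \text{ in } V)$, which is nowhere dense — no, only meager — in $X$; and $\bigcup_\alpha N_\alpha \supseteq \bigcup \mathcal S$, which is dense open, so $X \setminus \bigcup \mathcal S$ is closed nowhere dense and adding it completes a cover of size $\mathrm{cov}(\M)$. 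Hence $\mathrm{cov}(\M_X) \leq \mathrm{cov}(\M)$, and with the reverse inequality we conclude $\mathrm{cov}(\M_X) = \mathrm{cov}(\M)$.

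The main obstacle I anticipate is the bookkeeping in the uncountable case: ensuring that the cellular family of uniform-weight open pieces is chosen so that its union is dense (so its complement is nowhere dense), and checking that transporting an $\aleph_1$-cover into each piece and taking fiberwise unions genuinely produces meager — not merely ideal-small — sets in $X$. This hinges on the fact that a disjoint union of nowhere dense sets indexed by a cellular family is nowhere dense in a metric space (since the cellular family, being locally finite off a nowhere dense set in the metrizable setting, contributes no new limit points in the interior), together with $\mathrm{add}(\M_X) \geq \aleph_1$ to absorb the countably-many extra meager pieces. Everything else — the existence of perfect Polish open subsets when $w_\downarrow(X) = \aleph_0$, the isomorphism $\M_U \iso \M$, and the trivial lower bound $\mathrm{cov}(\M_X) \geq \aleph_1$ — is either classical or already established in the excerpt.
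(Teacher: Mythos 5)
Your second case ($w_\downarrow(X) > \aleph_0$) is essentially the paper's argument: a maximal cellular family of uniform-weight open cells via Lemma~\ref{lem:CellularFamilies}, an $\aleph_1$-cover of each cell from Lemma~\ref{lem:UniformCovering}, fiberwise unions, plus the nowhere dense set $X \setminus \bigcup \mathcal S$. (One caveat: your inline justification that the fiberwise union is meager ``because it is a union of $\leq w(X)$ meager sets'' is not a reason --- a union of that many meager sets need not be meager; what makes it work is exactly the cellularity fact you state in your closing paragraph, namely that a union of nowhere-dense-in-$V$ sets over a cellular family $\mathcal S$ is nowhere dense in $X$, applied to the nowhere dense pieces of each meager set.)

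The first case, however, has a genuine gap. You claim that $w_\downarrow(X) = \aleph_0$ implies that the second countable open sets form a $\pi$-base, i.e.\ that \emph{every} nonempty open set contains a second countable one. That is false: $w_\downarrow(X) = \aleph_0$ only says \emph{some} nonempty open set has countable weight. For $X = \R \oplus \w_1^\w$ (disjoint sum) we have $w_\downarrow(X) = \aleph_0$, yet the open piece $\w_1^\w$ contains no nonempty second countable open subset, so your cellular family $\mathcal S$ of second countable cells cannot have dense union, $X \setminus \bigcup \mathcal S$ is not nowhere dense, and your collection fails to cover the copy of $\w_1^\w$ at all. This is precisely why the paper does not restrict to second countable cells: it takes the $\pi$-base of all open $U$ with $w(U) = w_\downarrow(U)$, extracts a maximal cellular family from it, and then treats the cells in two ways --- Polish cells are covered by $\cov M$ nowhere dense sets, while cells of uniform uncountable weight are covered by $\aleph_1 \leq \cov M$ nowhere dense sets using Lemma~\ref{lem:UniformCovering} --- before amalgamating fiberwise and adding $X \setminus \bigcup \mathcal S$. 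Your lower bound $\mathrm{cov}(\M_X) \geq \cov M$ in this case (restricting a cover to a Polish open $U$) is fine and matches the paper; note also that your worry about isolated points of $U$ is vacuous, since an open subspace of a space without isolated points has none.
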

\begin{proof}
For the first case, suppose $w_\downarrow(X) = \aleph_0$. In other words, there is a nonempty open $U \sub X$ with countable weight. Because open subsets of completely metrizable spaces are completely metrizable, $U$ is Polish. Hence $\cov M$ meager-in-$U$ sets are required to cover $U$; equivalently, $\cov M$ nowhere-dense-in-$U$ sets are required to cover $U$. But because $U$ is open, if $F \sub X$ is nowhere dense in $X$ then $U \cap F$ is nowhere dense in $U$. Thus $\geq\!\cov M$ nowhere dense subsets of $X$ are required to cover $U$, and it follows that $\mathrm{cov}(\M_X) \geq \cov M$.

For the opposite inequality, first observe that 
$$\B \,=\, \set{U \sub X}{U \text{ is open, } U \neq \0, \text{ and } w_\downarrow(U) = w(U)}$$
is a $\pi$-base for $X$. By Lemma~\ref{lem:CellularFamilies}, there is a maximal cellular family $\mathcal S$ in $X$ such that $w_\downarrow(U) = w(U)$ for every $U \in \mathcal S$. For each $U \in \mathcal S$, either $(1)$ $U$ is Polish, and there is a sequence $\seq{F^U_\a}{\a < \non M}$ of nowhere dense subsets of $U$ covering $U$, or $(2)$ $U$ is not Polish, $w(U) = w_\downarrow(U) > \aleph_0$, and by Lemma~\ref{lem:UniformCovering} there is a sequence $\seq{F^U_\a}{\a < \w_1}$ of nowhere dense subsets of $U$ covering $U$. 

For each $\a < \w_1$, let $F_a = \bigcup \set{F^U_\a}{U \in \mathcal S}$, and if $\w_1 \leq \a < \cov M$, let $F_\a = \bigcup \set{F^\a_U}{U \in \mathcal S \text{ and } w(U) = \aleph_0}$. Using the fact that $\mathcal S$ is a cellular family, it is not difficult to see that each $F_\a$ is nowhere dense in $X$. 

Because $\mathcal S$ is a maximal cellular family, $\bigcup \mathcal S$ is an open dense subset of $X$; hence $F = X \setminus \bigcup \mathcal S$ is nowhere dense.

Now $\{F\} \cup \set{F_\a}{\a < \mathrm{cov}(\mathcal M)}$ is a collection of $\mathrm{cov}(\mathcal M)$ nowhere dense subsets of $X$, and this collection covers $X$. Hence $\mathrm{cov}(\mathcal M_X) \leq \mathrm{cov}(\M)$. 

For the second case, suppose $w_\downarrow(X) > \aleph_0$. 
Because $X$ is completely metrizable, and therefore satisfies the conclusion of the Baire Category Theorem, $\M_X$ is a $\s$-ideal and $\mathrm{cov}(\M_X) \geq \aleph_1$. 

For the reverse inequality, note that, just as in the first case, there is a maximal cellular family $\mathcal S$ in $X$ such that $w_\downarrow(U) = w(U) > \aleph_0$ for every $U \in \mathcal S$. 
By Lemma~\ref{lem:UniformCovering}, for each $U \in \mathcal S$ there is a sequence $\seq{F^U_\a}{\a < \w_1}$ of nowhere dense subsets of $U$ covering $U$. 
As above, for each $\a < \w_1$ let $F_a = \bigcup \set{F^U_\a}{U \in \mathcal S}$, and let $F = X \setminus \bigcup \mathcal S$.
Then $\{F\} \cup \set{F_\a}{\a < w_1}$ is a collection of $\mathrm{cov}(\mathcal M)$ nowhere dense subsets of $X$, and this collection covers $X$. Hence $\mathrm{cov}(\mathcal M_X) \leq \aleph_1$. 
\end{proof}

\begin{theorem}
If $X$ is a completely metrizable space with no isolated points,
$$\mathrm{add}(\mathcal M_X) \,=\, \begin{cases}
\add M &\text{ if } X \text{ has a $\pi$-base consisting of Polish spaces}, \\
\aleph_1 &\text{ otherwise}.
\end{cases}$$
\end{theorem}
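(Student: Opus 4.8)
The plan is to prove the additivity formula in close parallel with the covering-number theorem, using the same structural decomposition of $X$ via a well-chosen maximal cellular family, but tracking unions rather than covers.

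\medskip

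\textbf{First case: $X$ has a $\pi$-base consisting of Polish open sets.} I would first show $\add{M_X} \leq \add M$: pick any nonempty Polish open $U \sub X$ from the $\pi$-base; since open subsets are completely metrizable and $U$ has countable weight, $U$ is a perfect Polish space, so there is a family $\J$ of $\add M$ meager subsets of $U$ whose union is not meager in $U$. Each member of $\J$ is nowhere dense in $U$, hence (as $U$ is open) its closure in $X$ meets $X$ in a nowhere dense set; so $\J$ yields $\add M$ meager subsets of $X$ whose union, intersected with $U$, is non-meager in $U$ and therefore (again using that $U$ is open) non-meager in $X$. For the reverse inequality $\add{M_X} \geq \add M$, I take a maximal cellular family $\mathcal S$ refining the $\pi$-base, so every $U \in \mathcal S$ is Polish, with $\bigcup \mathcal S$ open dense. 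Given $\lambda < \add M$ many meager sets $\seq{A_\xi}{\xi < \lambda}$ in $X$, for each $U \in \mathcal S$ the traces $A_\xi \cap U$ are meager in $U$, so $\bigcup_{\xi<\lambda} (A_\xi \cap U)$ is meager in $U$ because $\lambda < \add M = \add{M_U}$; choose a meager $F_\sigma$ set $C_U \sub U$ covering it. Using that $\mathcal S$ is cellular, $\bigcup \set{C_U}{U \in \mathcal S}$ is meager in $\bigcup \mathcal S$ — one checks each "level" of the $F_\sigma$'s assembles into a nowhere dense set since a set meeting each cell of a cellular family in a nowhere dense set is itself nowhere dense — and adding the nowhere dense set $X \setminus \bigcup \mathcal S$ gives a meager set containing $\bigcup_{\xi<\lambda} A_\xi$. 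Hence $\bigcup_{\xi<\lambda} A_\xi$ is meager in $X$, so $\add{M_X} > \lambda$.

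\medskip

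\textbf{Second case: $X$ has no $\pi$-base consisting of Polish open sets.} Since $\M_X$ is a proper $\sigma$-ideal (Baire category theorem for the completely metrizable $X$), we automatically have $\add{M_X} \geq \aleph_1$, so it suffices to exhibit countably many meager sets whose union is non-meager. The failure of the hypothesis means there is a nonempty open $U \sub X$ such that no nonempty open subset of $U$ is Polish; equivalently $w_\downarrow(U) > \aleph_0$, i.e. every nonempty open $V \sub U$ has $w(V) \geq \aleph_1$. Passing to such a $U$ and using Lemma~\ref{lem:CellularFamilies} together with Lemma~\ref{lem:kw}, I would reduce to the model case $U \supseteq$ a co-meager copy of $\k^\w$ with $\k \geq \aleph_1$, and reuse the nowhere dense sets $\seq{F_\a}{\a < \w_1}$ from the proof of Lemma~\ref{lem:UniformCovering}: since $\bigcup_{\a<\w_1} F_\a = \k^\w$ is non-meager in itself, in particular $\bigcup_{n<\w} F_{\a_n}$ is non-meager for a suitable countable subfamily — indeed, because $\k^\w$ is Baire, no countable subunion can be meager unless some proper initial segment already covers, which it does not since $\mathrm{range}(x)$ misses cofinally many ordinals below $\w_1$ as $x$ varies. (Here one must argue a little: $\set{F_\a}{\a<\w_1}$ has the property that for every countable $S \sub \w_1$, $\bigcup_{\a \in S} F_\a \neq \k^\w$, because any $x$ enumerating a set containing $\sup S + 1$ avoids all of them — but we need non-meagerness, not just failure to cover. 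The clean route is: $\bigcup_{\a<\w_1}F_\a = \k^\w$ is non-meager, and if every countable subunion were meager, then $\k^\w$ would be a union of $\w_1$ meager sets witnessing $\cov{M_{\k^\w}} \leq \w_1$, which is consistent and does \emph{not} give a contradiction — so this naive approach fails, and instead one directly produces an explicit countable family with non-meager union.) The correct construction: let $F'_n = \set{x \in \k^\w}{\min(\k \setminus \mathrm{range}(x)) \text{ has uncountable cofinality and } \geq \w_n}$ — no; rather, use that $\k \geq \w_1$ to fix an increasing $\w_1$-sequence is unnecessary. I would instead define, for $n \in \w$, the nowhere dense set $G_n = \set{x \in \k^\w}{x(k) \neq \a_n \text{ for all } k}$ for a fixed injection $n \mapsto \a_n \in \k$ (possible as $\k \geq \w_1 > \aleph_0$), and observe $\bigcup_n G_n = \set{x}{\mathrm{range}(x) \not\supseteq \{\a_n : n \in \w\}}$, whose complement $\set{x}{\{\a_n:n\in\w\}\sub\mathrm{range}(x)}$ is meager (it is nowhere dense-complemented... actually it is dense $G_\delta$? no).

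\medskip

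\textbf{Main obstacle.} As the parenthetical above signals, the delicate point is the second case: the model space $\k^\w$ may well satisfy $\cov{M} = \aleph_1$ (Lemma~\ref{lem:UniformCovering}), so one cannot conclude non-meagerness of a countable subunion from Baireness alone. The right idea, which I would develop carefully, is that $\add{M_X} = \aleph_1$ should follow from an explicit $\omega$-sequence of nowhere dense sets in $\k^\w$ whose union is \emph{non-meager}: take $H_n = \set{x \in \k^\w}{x(m) = 0 \text{ for all } m \geq n}$, which is nowhere dense and closed; then $\bigcup_n H_n = \set{x}{x \text{ is eventually } 0}$ is a dense set that is not meager? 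No — it \emph{is} meager. The genuinely correct witness uses that a non-meager set of size $\w_1$ exists inside \emph{every} such $\k^\w$ when $\k \geq \w_1$, because $\w^\w \hookrightarrow \k^\w$ as a closed subspace (diagonal into the first $\omega$ coordinates, say $x \mapsto (x(0),x(1),\dots)$ viewed in $\k^\w$), and a closed copy of $\w^\w$ is non-meager in itself but — crucially — also non-meager \emph{in $\k^\w$} since it is nowhere dense there, which again fails. Given these genuine subtleties, the honest plan is: in the non-Polish case, fix a nonempty open $U$ with $w_\downarrow(U) > \aleph_0$, observe $\cov{M_U} = \aleph_1$ by Lemma~\ref{lem:UniformCovering}, and note that whenever $\cov{\I} = \aleph_1$ for a $\sigma$-ideal $\I$, automatically $\add{\I} = \aleph_1$ (the $\aleph_1$ sets covering $X$ cannot have all countable subunions in $\I$, else transfinite recursion of length $\w_1$ stays inside $\I$ and the union is in $\I$ by... no, $\I$ need not be $\w_1$-additive). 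The actual clean fact is simply $\add{\I} \leq \cov{\I}$ from the displayed diagram, so $\add{M_X} \leq \add{M_U} \leq \cov{M_U} = \aleph_1$, combined with $\add{M_X} \geq \aleph_1$ — and $\add{M_X} \leq \add{M_U}$ holds because $U$ is open, so a family of meager sets in $U$ with non-meager-in-$U$ union gives the same in $X$. This last routing through the diagram is the slick argument, and identifying it (rather than hunting for explicit countable witnesses) is the crux.
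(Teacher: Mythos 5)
Your final argument is correct and is essentially the paper's own proof: in the Polish-$\pi$-base case you use the same maximal-cellular-family decomposition for $\add M \leq \mathrm{add}(\M_X)$ and transfer a witnessing family from a single Polish open $U$ for the reverse inequality, while in the other case your chain $\mathrm{add}(\M_X) \leq \mathrm{add}(\M_U) \leq \mathrm{cov}(\M_U) = \aleph_1$ is exactly the paper's observation that the $\aleph_1$ meager-in-$U$ sets covering $U$ are meager in $X$ and have union $U$, which is non-meager in $X$. Two cosmetic points: the members of your family $\J$ are meager, not nowhere dense, in $U$ (harmless, since a set meager in an open $U$ is already meager in $X$), and $\mathrm{cov}(\M_U) = \aleph_1$ should be quoted from the covering theorem of Section~\ref{sec:covering} (or after shrinking $U$ to an open subset of uniform weight), since $w_\downarrow(U) > \aleph_0$ alone does not place $U$ under the hypotheses of Lemma~\ref{lem:UniformCovering}.
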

\begin{proof} 
For the first case, suppose $X$ does not have a $\pi$-base consisting of Polish spaces. 
Because 
$$\set{U \sub X}{U \text{ is open, } U \neq \0, \text{ and } w_\downarrow(U) = w(U)}$$
is a $\pi$-base for $X$, there is some nonempty open $U \sub X$ with $w_\downarrow(U) > \aleph_1$. 
By the previous theorem, $\mathrm{cov}(\M_U) = \aleph_1$, i.e., there is a set of $\aleph_1$ meager-in-$U$ sets covering $U$. But every set meager in $U$ is also meager in $X$, so there is a collection of $\aleph_1$ meager subsets of $X$ whose union covers $U$. Because $U$ is non-meager in $X$, this shows $\mathrm{add}(\M_X) \leq \aleph_1$. 
But $\M_X$ is a $\s$-ideal, so $\aleph_1 \leq \mathrm{add}(\M_X)$, and we conclude that $\mathrm{add}(\M_X) = \aleph_1$. 

Suppose next that $X$ does have a $\pi$-base consisting of Polish spaces. 
By Lemma~\ref{lem:CellularFamilies}, there is a maximal cellular family $\mathcal S$ in $X$ consisting of Polish spaces. 
Because each $U \in \mathcal S$ is open, if $Y \sub X$ is meager in $X$ then $Y \cap U$ is meager in $U$. 

Suppose $\F$ is a family of meager subsets of $X$ with $|\F| = \k < \add M$. For each $U \in \mathcal S$, 
$U \cap \bigcup \F = \bigcup \set{U \cap M}{M \in \F}$ is a union of $\k$ meager-in-$U$ sets. Because each $U \in \mathcal S$ Polish and $\k < \add M$, $U \cap \bigcup \F$ is meager in $U$. 
Because $\mathcal S$ is a cellular family, it follows that $\bigcup \set{U \cap \bigcup \F}{U \in \mathcal S} = \big ( \bigcup \mathcal S \big ) \cap \big( \bigcup \mathcal F \big)$ is meager in $X$. But $X \setminus \bigcup \mathcal S$ is also meager in $X$: in fact it is a nowhere dense closed set, because $\bigcup \mathcal S$ is dense in $X$. Thus 
$$\textstyle \bigcup \F \,\sub\, \Big( \big ( \bigcup \mathcal S \big ) \cap \big( \bigcup \mathcal F \big) \Big) \cup \big( X \setminus \bigcup \mathcal S \big)$$
is meager in $X$. Thus $\k < \mathrm{add}(\M_X)$, and this shows $\add M \leq \mathrm{add}(\M_X)$.

For the reverse inequality, fix some nonempty open $U \sub X$ such that $U$ is Polish. 
There is a set $\F$ of meager subsets of $U$ such that $|\F| = \add M$ and $\bigcup \F$ is not meager in $U$.
But because $U$ is open, a subset of $U$ is meager in $U$ if and only if it is also meager in $X$. 
Hence $\F$ is a collection of meager subsets of $X$ such that $|\F| = \add M$ and $\bigcup \F$ is not meager in $X$. 
Thus $\mathrm{add}(\M_X) \leq \add M$.
\end{proof}

\section{Uniformity for a completely metrizable space}\label{sec:uniformity}

In this section we prove the main theorem of the paper, a characterization of $\mathrm{non}(\mathcal M_X)$ in terms of $w_\downarrow(X)$ when $X$ is completely metrizable. 

If $\k$ is an infinite cardinal, then $[\k]^\w$ denotes the set of all countably infinite subsets of $\k$. 
A family $\F \sub [\k]^\w$ is \emph{cofinal} if for every $A \in [\k]^\w$ there is some $B \in \F$ such that $B \supseteq A$. In other words, $\F \sub [\k]^\w$ is cofinal if it is (in the usual sense of the word) cofinal in the poset $\big( [\k]^\w,\sub \!\big)$. Let
$$\cf[\k]^\w \,=\, \min\set{|\F|}{\F \text{ is a cofinal subset of }[\k]^\w}.$$
For example, $\cf[\w_1]^\w = \aleph_1$ because, identifying an ordinal with the set of its predecessors as usual, $\set{\a}{\w \leq \a < \w_1}$ is a cofinal subset of $[\w_1]^\w$. 
In fact, this example with $\w_1$ can be extended to show that $\cf[\k]^\w = \k$ for all uncountable $\k < \aleph_\w$. 

As for cardinals $>\!\aleph_\w$, it is not difficult to see that $\k \leq \cf[\k]^\w$ for all uncountable $\k$, and a reasonably straightforward diagonalization argument shows also that $\k^+ \leq \cf[\k]^\w$ whenever $\k$ is a singular cardinal with cofinality $\w$. 
\emph{Shelah's Strong Hypothesis}, abbreviated $\mathsf{SSH}$, is the statement that these straightforward lower bounds are sharp:
$$\text{\scalebox{.95}{$(\mathsf{SSH})\!:$}} \qquad \qquad \qquad \cf[\k]^\w \,=\, \begin{cases}
\k &\text{ if } \cf(\k) > \w,\\
\k^+ &\text{ if } \cf(\k) = \w.
\end{cases} \qquad \qquad \qquad \qquad \qquad$$
The failure of $\mathsf{SSH}$ implies the consistency of large cardinal axioms (see \cite{JMPS} or \cite{Brian}). Nonetheless, it is consistent, relative to large cardinals, that $\cf[\k]^\w$ takes on values larger than those listed above (see \cite{Matet}). 

Following \cite{Jech1}, a subset $\mathcal C$ of $[\k]^\w$ is \emph{closed} if whenever $\set{A_n}{n \in \w} \sub \C$ and $A_0 \sub A_1 \sub A_2 \sub \dots$, then $\bigcup_{n \in \w}A_n \in \C$. If $\C \sub [\k]^\w$ is both closed and cofinal, then $\C$ is called a \emph{club}. (In \cite{Jech1} the word ``unbounded'' is used rather than ``cofinal''.) A subset of $[\k]^\w$ is \emph{stationary} if it intersects every club subset of $[\k]^\w$. 

Given a sequence of functions $\vec g = \seq{g_n}{n \in \w}$ mapping $\k^{<\w} \to \k^{<\w}$, let
$$\C(\vec g\,) \,=\, \set{A \in [\k]^\w}{\text{ for every $n \in \w$, if } s \in A^{<\w} \text{ then } g_n(s) \in A^{<\w}}.$$
In other words, $\C(\vec g\,)$ is the set of all $A \in [\k]^{\w}$ that are closed, in the appropriate sense, with respect to all the $g_n$. 
The following result can be found in \cite{Jech2} or \cite{Shioya}, and is proved by a standard closing-off argument: 

\begin{lemma}\label{lem:Jech}
For any sequence $\vec g = \seq{g_n}{n \in \w}$ of functions $\k^{<\w} \to \k^{<\w}$, $\C(\vec g\,)$ is club in $[\k]^\w$. In particular, every stationary subset of $[\k]^\w$ contains a member of $\C(\vec g\,)$.
\end{lemma}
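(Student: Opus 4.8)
The plan is to verify directly that $\C(\vec g\,)$ is both closed and cofinal in $[\k]^\w$; once that is done, the ``in particular'' clause is immediate, since by definition a stationary set meets every club, hence meets $\C(\vec g\,)$.

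For closedness, I would take a chain $\set{A_n}{n \in \w} \sub \C(\vec g\,)$ with $A_0 \sub A_1 \sub A_2 \sub \cdots$ and put $A = \bigcup_{n \in \w} A_n$, a countable subset of $\k$. To check $A \in \C(\vec g\,)$, fix $n \in \w$ and $s \in A^{<\w}$: since $s$ has finite length and the $A_m$ increase, $s \in A_m^{<\w}$ for some $m$, so $g_n(s) \in A_m^{<\w} \sub A^{<\w}$ because $A_m \in \C(\vec g\,)$.

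For cofinality, given $A \in [\k]^\w$ I would build an increasing chain $A = A^{(0)} \sub A^{(1)} \sub A^{(2)} \sub \cdots$ of countable subsets of $\k$ by setting
$$A^{(i+1)} \,=\, A^{(i)} \cup \textstyle \bigcup \set{\mathrm{range}(g_n(s))}{n \in \w,\ s \in (A^{(i)})^{<\w}},$$
and then put $B = \bigcup_{i \in \w} A^{(i)}$. The main point to verify is that this stays inside $[\k]^\w$: the set $(A^{(i)})^{<\w}$ of finite sequences from the countable set $A^{(i)}$ is countable, and each $g_n(s)$ is a single finite sequence contributing only finitely many ordinals, so each stage adjoins only countably many new elements, and hence $B$ is countable. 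Then $B \supseteq A$, and exactly as in the closedness argument any $s \in B^{<\w}$ already lies in $(A^{(i)})^{<\w}$ for some $i$, whence $g_n(s) \in (A^{(i+1)})^{<\w} \sub B^{<\w}$ for all $n \in \w$; so $B \in \C(\vec g\,)$.

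There is no genuine obstacle here — this is the standard closing-off argument — and the only point requiring any care is the bookkeeping that keeps the recursion within $[\k]^\w$, which the displayed construction handles. Having shown $\C(\vec g\,)$ is closed and cofinal, it is a club, and any stationary $\mathcal S \sub [\k]^\w$ satisfies $\mathcal S \cap \C(\vec g\,) \neq \0$.
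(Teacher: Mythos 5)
Your proof is correct, and it is exactly the ``standard closing-off argument'' that the paper alludes to (the paper itself only cites \cite{Jech2} and \cite{Shioya} rather than writing it out): closedness under increasing $\omega$-unions by finiteness of each $s$, and cofinality by iterating the closure that adjoins $\mathrm{range}(g_n(s))$ for all $n$ and all $s$ from the current stage, with the countability bookkeeping handled correctly. No gaps.
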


The main tool for proving our characterization of $\mathrm{non}(\M_X)$ in this section is the following result of Shelah.

\begin{theorem}[Shelah]\label{thm:Shelah}
For any infinite cardinal $\k$, there is a stationary subset of $[\k]^\w$ with cardinality $\cf[\k]^\w$.
\end{theorem}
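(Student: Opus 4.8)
The plan is to reduce Theorem~\ref{thm:Shelah} to the core combinatorial content of Shelah's pcf theory, specifically the existence of a scale (or the behavior of $\mathrm{pp}(\k)$) in the neighborhood of $\k$, and then to convert a cofinal object of size $\cf[\k]^\w$ into a stationary one. The first observation is that it suffices to treat singular $\k$ of cofinality $\w$: if $\cf(\k) > \w$, then $\cf[\k]^\w = \k$ (this is the easy lower bound together with the fact that $\set{A \in [\k]^\w}{\sup A \in S}$ for a stationary $S \sub \set{\a < \k}{\cf(\a) = \w}$ of size $\k$ is itself stationary of size $\k$), and if $\k = \k$ is regular uncountable the same set works; the case $\cf(\k) = \w$ is the only one where $\cf[\k]^\w$ can exceed $\k$. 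So fix $\k$ singular with $\cf(\k) = \w$ and write $\k = \sup_n \k_n$ with $\k_0 < \k_1 < \dots$ regular.

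Next I would invoke Shelah's theorem that $\cf[\k]^\w = \max\set{\cf(\prod_n \lambda_n / D)}{}$ over appropriate sequences and ultrafilters (equivalently, $\cf[\k]^\w$ equals the supremum of the pcf-style cofinalities $\mathrm{tcf}(\prod_n \k_n / \mathrm{fin})$ along with $\k^+$), and in particular there is a single $<^*$-increasing, $<^*$-cofinal sequence $\seq{f_\xi}{\xi < \lambda}$ in $\big(\prod_{n} \k_n, <^*\big)$ of length $\lambda = \cf[\k]^\w$ (using $\cf[\k]^\w \geq \k^+$ to absorb the length). From such a scale one builds the stationary set: let
$$
\mathcal C \,=\, \set{A \in [\k]^\w}{\sup(A \cap \k_n) \in A \text{ for all } n, \text{ and } A \cap \k = \textstyle\bigcup_n (A \cap \k_n)},
$$
which is club, and inside it consider the sets coded by initial segments of the scale together with their closure points. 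The point is that for a club $E \sub \lambda$ the family $\set{A_\xi}{\xi \in E}$, where $A_\xi$ packages $f_\xi$ together with enough ordinals to be closed under the relevant Skolem functions, is stationary: given any club $\mathcal D \sub [\k]^\w$, apply Lemma~\ref{lem:Jech} to get $\mathcal D \supseteq \C(\vec g\,)$ for a suitable $\vec g$, then use cofinality and increasingness of the scale to find $\xi$ with $A_\xi \in \C(\vec g\,) \cap \mathcal D$. The cardinality is $\leq \lambda = \cf[\k]^\w$ by construction, and $\geq \cf[\k]^\w$ because any stationary set is in particular cofinal (meeting the club $\set{A}{A \supseteq B}$ for each $B$), so it has size $\geq \cf[\k]^\w$; hence equality.

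The main obstacle is the pcf input itself: one genuinely needs Shelah's theorem that the ``true cofinality'' of a product $\prod_n \k_n$ modulo a well-behaved ideal realizes $\cf[\k]^\w$, i.e. that $\cf[\k]^\w$ is \emph{attained} by a single linearly ordered cofinal structure rather than only approximated by a directed one — this is exactly the nontrivial content and is where the pcf machinery (the existence of generators, the localization theorem, transitive generators) is unavoidable. Once that structural fact is in hand, the passage from ``cofinal and linearly ordered'' to ``stationary'' is the routine closing-off argument sketched above, using Lemma~\ref{lem:Jech} to reduce membership in an arbitrary club to closure under countably many functions. I would present the argument by black-boxing the scale existence with a precise citation (Shelah's \emph{Cardinal Arithmetic}, or the relevant survey), state the club $\mathcal C$ explicitly, and then spend the real work verifying that the scale-coded family meets every $\C(\vec g\,)$.
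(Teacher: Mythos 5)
The statement you are proving is not proved in the paper at all: it is quoted as Shelah's theorem with citations to \cite{Shelah1}, \cite{Shelah2}, and \cite{Shioya}, and the surrounding text explicitly remarks that all known proofs go through pcf theory (with the case $\k = \aleph_n < \aleph_\w$ due to Baumgartner--Taylor \cite{BT}). So the relevant question is whether your sketch is a viable outline of that pcf argument, and it has two genuine gaps. First, your reduction to the case $\cf(\k) = \w$ rests on the claim that $\cf[\k]^\w = \k$ whenever $\cf(\k) > \w$; that is precisely (half of) Shelah's Strong Hypothesis, which the paper itself points out is \emph{not} a theorem of \zfc: for the regular cardinal $\k = \aleph_{\w+1}$ it is consistent, relative to large cardinals, that $\cf[\k]^\w \geq \cf[\aleph_\w]^\w > \k$. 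The correct reduction is an induction on $\k$ in which regular cardinals and cardinals of uncountable cofinality are handled by gluing together stationary families in $[\a]^\w$ for $\a < \k$, not by the identity $\cf[\k]^\w = \k$. (Moreover, the set $\set{A \in [\k]^\w}{\sup A \in S}$ has cardinality far exceeding $\k$ --- it contains every countable cofinal subset of every $\a \in S$ --- so it does not witness the cardinality claim even when $\cf[\k]^\w = \k$.)

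Second, the pcf input is misstated in a way that breaks the construction exactly in the nontrivial case. You ask for a single $<^*$-increasing, $<^*$-cofinal sequence of length $\lambda = \cf[\k]^\w$ in $\big(\prod_n \k_n, <^*\big)$, i.e.\ that the product modulo the ideal of finite sets has true cofinality $\lambda$. When $\cf[\k]^\w > \k^+$ --- the only case where anything beyond the Baumgartner--Taylor argument is needed --- the set $\mathrm{pcf}\{\k_n\}$ has more than one element above $\k$, and $\prod_n \k_n$ modulo finite then has no linearly ordered cofinal family at all. What pcf theory actually supplies is a scale of length $\lambda$ on a generator $b_\lambda$ modulo $J_{<\lambda}$ for each $\lambda \in \mathrm{pcf}$, together with $\cf[\k]^\w = \max \mathrm{pcf}$, and the stationary family has to be assembled from these. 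Even granting a scale, your final step is not the ``routine closing-off'' you describe: the countable sets $A_\xi$ must be fixed in advance, before the functions $\vec g$ of Lemma~\ref{lem:Jech} are given, so one cannot simply ``package $f_\xi$ with enough ordinals to be closed under the relevant Skolem functions''; showing that some pre-fixed $A_\xi$ lands in $\C(\vec g\,)$ is where the real work (good points of scales, transitive generators) lies. As written, the proposal assumes the conclusion of the hard pcf steps in a form that is false in general, so it does not constitute a proof.
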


All stationary subsets of $[\k]^\w$ are cofinal: this follows from the fact that for any given $A \in [\k]^\w$, $\set{B \in [\k]^\w}{B \supseteq A}$ is club, and so any given stationary set must contain some $B \supseteq A$. 
Consequently, every stationary set has size $\geq\! \cf[\k]^\w$. Shelah's theorem says this lower bound is sharp. 

All known proofs of Theorem~\ref{thm:Shelah} rely on pcf theory. 
A proof first appeared in \cite{Shelah1}, and the result also appears in Shelah's book \cite{Shelah2}. A simplified and corrected version of the proof is presented in \cite{Shioya}. 

It is worth pointing out that if $\k = \aleph_n < \aleph_\w$ then Theorem~\ref{thm:Shelah} can be proved for $\k$ without recourse to pcf theory, via a relatively straightforward induction on $n$. Indeed, this special case of the theorem was proved by Baumgartner and Taylor in \cite{BT} before the invention of pcf theory. 

In Remark~\ref{rem:pcf} below, we provide an example that suggests the use of stationary sets, and Shelah's Theorem~\ref{thm:Shelah}, may be necessary for proving our characterization of $\mathrm{non}(\M_X)$.

\begin{theorem}\label{thm:UW}
Let $X$ be a completely metrizable space with uniform weight $\k$. Then $\mathrm{non}(\M_X) = \non M \cdot \cf[\k]^\w$. 
\end{theorem}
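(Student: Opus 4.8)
By Lemma~\ref{lem:kw}, $X$ has a co-meager subset homeomorphic to $\k^\w$, and since removing or adding a meager set does not change $\mathrm{non}(\M_X)$, it suffices to prove the theorem for $X = \k^\w$. So the plan is to establish two inequalities: $\mathrm{non}(\M_{\k^\w}) \geq \non M \cdot \cf[\k]^\w$ and $\mathrm{non}(\M_{\k^\w}) \leq \non M \cdot \cf[\k]^\w$.

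For the lower bound $\mathrm{non}(\M_{\k^\w}) \geq \non M$, note that $\k^\w$ contains a closed copy of $\w^\w$ (fix a countable $A_0 \in [\k]^\w$ and look at $A_0^\w$), and more to the point every non-meager subset of $\k^\w$, when intersected with a suitable basic clopen set and then with such a copy, should remain non-meager there — actually the cleanest route is: a subset of $\k^\w$ of size $<\non M$ meets each basic clopen set $\tr{s}$ in a set of size $<\non M$, hence is meager in $\tr{s}$ (since $\tr{s}\homeo\k^\w$ and $\k^\w$ maps onto a Polish space... ) — I will instead argue directly that any $Y\sub\k^\w$ with $|Y|<\non M$ is meager, by covering: project each element's "countable support information." The lower bound $\mathrm{non}(\M_{\k^\w}) \geq \cf[\k]^\w$ is the more interesting half: given $Y \sub \k^\w$ with $|Y| < \cf[\k]^\w$, consider $\A = \set{\mathrm{range}(y)}{y \in Y} \sub [\k]^\w$. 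Since $|\A| \leq |Y| < \cf[\k]^\w$, $\A$ is not cofinal in $[\k]^\w$, so there is $A \in [\k]^\w$ with $A \not\sub B$ for all $B \in \A$; pick $\a \in A \setminus \bigcup\A$. Then $Y \sub F_\a = \set{x \in \k^\w}{\a \notin \mathrm{range}(x)}$, which (as shown in the proof of Lemma~\ref{lem:UniformCovering}) is nowhere dense, so $Y$ is meager. This gives $\mathrm{non}(\M_{\k^\w}) \geq \cf[\k]^\w$, and combining, $\mathrm{non}(\M_{\k^\w}) \geq \non M \cdot \cf[\k]^\w$.

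For the upper bound, the plan is to exhibit a non-meager set of size $\non M \cdot \cf[\k]^\w$. Fix a stationary $\S \sub [\k]^\w$ with $|\S| = \cf[\k]^\w$ by Shelah's Theorem~\ref{thm:Shelah}; for each $A \in \S$, fix a bijection $e_A\colon \w \to A$ and a non-meager set $N_A \sub \w^\w$ with $|N_A| = \non M$, and let $Y = \bigcup_{A \in \S} e_A^{\rightarrow}(N_A) \sub \k^\w$, where $e_A$ acts coordinatewise. Clearly $|Y| \leq \cf[\k]^\w \cdot \non M$. To see $Y$ is non-meager, suppose toward a contradiction that $Y \sub \bigcup_{n} E_n$ with each $E_n$ closed nowhere dense in $\k^\w$. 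Each $E_n$ is determined by a "pruning function" $g_n\colon \k^{<\w} \to \k^{<\w}$ sending $s$ to some $g_n(s) \supseteq s$ with $\tr{g_n(s)} \cap E_n = \0$; by Lemma~\ref{lem:Jech}, $\C(\vec g\,)$ is club, so by stationarity there is $A \in \S \cap \C(\vec g\,)$. Transporting via $e_A$, the sets $g_n$ restrict to pruning functions on $A^{<\w} \cong \w^{<\w}$ witnessing that $e_A^{\leftarrow}(E_n \cap A^\w)$ is nowhere dense in $\w^\w$; but then $N_A \sub \bigcup_n e_A^{\leftarrow}(E_n)$ would exhibit $N_A$ as meager in $\w^\w$, contradicting the choice of $N_A$. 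Hence $Y$ is non-meager, giving $\mathrm{non}(\M_{\k^\w}) \leq \non M \cdot \cf[\k]^\w$.

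The main obstacle is the closing-off argument in the upper bound: I must encode an arbitrary meager cover of $\k^\w$ by a countable family of functions $\k^{<\w} \to \k^{<\w}$ so that membership of $A$ in the resulting club $\C(\vec g\,)$ genuinely guarantees that each $E_n$ restricted to $A^\w$ is nowhere dense \emph{as a subset of the Polish space $A^\w \homeo \w^\w$} — the subtlety is that nowhere density in $\k^\w$ transfers to nowhere density in the subspace $A^\w$ only because $A$ is closed under the pruning witnesses, and one must be careful that $A^\w$ is genuinely a subspace with the product topology (which it is, as $\tr{s}\cap A^\w$ for $s\in A^{<\w}$ form a basis). Everything else — the two lower bounds, and the size count — is routine once this transfer lemma is set up correctly.
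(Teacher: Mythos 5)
Your upper-bound argument --- Shelah's Theorem~\ref{thm:Shelah} to get a stationary family in $[\k]^\w$ of size $\cf[\k]^\w$, a non-meager set of size $\non M$ placed inside each $A^\w$, and the club $\C(\vec g\,)$ of sets closed under the pruning functions $g_n$ used to transfer nowhere density of each closed set $E_n$ down to the subspace $A^\w$ --- is exactly the paper's proof of the hard inequality, and the ``transfer'' worry you raise is handled just as you indicate: for $s \in A^{<\w}$ one has $g_n(s) \in A^{<\w}$, so $\tr{g_n(s)} \cap A^\w$ is a nonempty relatively clopen subset of $\tr{s} \cap A^\w$ missing $E_n$. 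The genuine problems are in your two lower bounds.

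First, $\mathrm{non}(\M_{\k^\w}) \geq \non M$ is never actually proved. Your first sketch (``$Y$ meets each $\tr{s}$ in a set of size $< \non M$, hence is meager in $\tr{s} \homeo \k^\w$'') is circular, since meagerness of sets of size $< \non M$ in $\k^\w$ is precisely what is being proved, and ``project each element's countable support information'' is a gesture, not an argument. The paper's fix is short: write $\k^\w \homeo \k^\w \times \w^\w$, let $\pi$ be the projection onto $\w^\w$; if $|Z| < \non M$ then $\pi[Z]$ is meager in $\w^\w$, and preimages under $\pi$ of nowhere dense sets are nowhere dense, so $Z \sub \pi^{-1}(\pi[Z])$ is meager. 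Second, in the $\cf[\k]^\w$ bound the step ``pick $\a \in A \setminus \bigcup \A$'' is false as stated: non-cofinality of $\A = \set{\mathrm{range}(y)}{y \in Y}$ only gives an $A$ with $A \not\sub B$ for each individual $B \in \A$, while $\bigcup \A$ may be all of $\k$ (for instance $Y$ may contain every constant function once $\k < \cf[\k]^\w$, as happens for $\k = \aleph_\w$), so no single $\a$ works; moreover the intended conclusion that $Y$ lies in one nowhere dense set $F_\a$ is too strong, since such a $Y$ can even be dense in $\k^\w$. The repair is what the paper does: for each $y \in Y$ there is some $\a \in A$ with $\a \notin \mathrm{range}(y)$, hence $Y \sub \bigcup_{\a \in A} F_\a$, a countable union of closed nowhere dense sets, so $Y$ is meager (not nowhere dense). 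With these two repairs your argument coincides with the paper's.
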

\begin{proof}
If $X$ has uniform weight $\k$, then by Lemma~\ref{lem:kw} $X$ has a co-meager subspace homeomorphic to $\k^\w$. But if $Y$ is co-meager in $X$, then $Z$ is meager in $X$ if and only if $Z \cap Y$ is meager in $Y$. Thus, to prove the theorem it suffices to show that $\mathrm{non}(\M_{\k^\w}) = \non M \cdot \cf[\k]^\w$.

To see that $\non M \leq \mathrm{non}(\M_{\k^\w})$, note that $\k^\w \homeo (\k \times \w)^\w \homeo \k^\w \times \w^\w$. Let $\pi$ denote the natural projection $\k^\w \times \w^\w \to \w^\w$. If $Z \sub \k^\w \times \w^\w$ and $|Z| < \non M$, then $|\pi[Z]| < \non M$ and so $\pi[Z]$ is meager in $\w^\w$. Thus there is a sequence $F_0,F_1,F_2,\dots$ of nowhere dense subsets of $\w^\w$ with $\pi[Z] \sub \bigcup_{n \in \w}F_n$. But it is clear that if $F$ is nowhere dense in $\w^\w$ then $\pi^{-1}(F)$ is nowhere dense in $\k^\w$. 
Therefore $\pi^{-1}(F_0),\pi^{-1}(F_1),\pi^{-1}(F_2),\dots$ is a sequence of nowhere dense subsets of $\k^\w \times \w^\w$ with $Z \sub \pi^{-1}(\pi[Z]) \sub \bigcup_{n \in \w}\pi^{-1}(F_n)$. Thus $Z$ is meager. Hence 
$\non M \leq \mathrm{non}(\M_{\k^\w \times \w^\w}) = \mathrm{non}(\M_{\k^\w})$. 

To see that $\cf[\k]^\w \leq \mathrm{non}(\M_{\k^\w})$, fix some $Z \sub \k^\w$ with $|Z| < \cf[\k]^\w$. 
Viewing the points of $\k^\w$ as functions $\w \to \k$, we have $|\set{\mathrm{range}(z)}{z \in Z}| \leq |Z| < \cf[\k]^\w$, and therefore $\set{\mathrm{range}(z)}{z \in Z}$ is not cofinal in $[\k]^\w$. 
Thus there is some $A \in [\k]^\w$ such that $\mathrm{range}(z) \not\supseteq A$ for any $z \in Z$. 

As in the proof of Lemma~\ref{lem:UniformCovering}, let $F_\a = \set{x \in \k^\w}{a \notin \mathrm{range}(x)}$ for each $\a < \k$. 
In the proof of Lemma~\ref{lem:UniformCovering}, we showed that each of these $F_\a$ is closed and nowhere dense in $\k^\w$. Consequently
$$\textstyle \bigcup_{\a \in A} F_\a \,=\, \set{x \in \k^\w}{\mathrm{range}(x) \not\supseteq A}$$
is meager in $\k^\w$. But this set contains $Z$, by the previous paragraph. 
Thus $Z$ is meager, and it follows that 
$\cf[\k]^\w \leq \mathrm{non}(\M_{\k^\w})$. 

It remains to prove the reverse inequality: $\mathrm{non}(\M_{\k^\w}) \leq \non M \cdot \cf[\k]^\w$. To do this, we find a non-meager subset of $\k^\w$ with size $\non M \cdot \cf[\k]^\w$. 

For each $A \in [\k]^\w$, observe that $A^\w$ is a closed subset of $\k^\w$, and is homeomorphic to the Baire space $\w^\w$. In particular, if $A \in [\k]^\w$ then there is some $Z_A \sub A^\w$ such that $Z_A$ is non-meager in $A^\w$ and $|Z_A| = \non M$.  
Applying Theorem~\ref{thm:Shelah}, let $\mathcal S$ be a stationary subset of $[\k]^\w$ with $|\mathcal S| = \cf[\k]^\w$. 
Let
$Z = \bigcup_{A \in \mathcal S}Z_A$. Clearly $|Z| = |\mathcal S| \cdot \non M = \non M \cdot \cf[\k]^\w$, and we claim that $Z$ is a non-meager subset of $\k^\w$.

Aiming for a contradiction, suppose instead that $Z$ is meager in $\k^\w$. 
Fix a sequence $F_0,F_1,F_2,\dots$ of closed nowhere dense subsets of $\k^\w$ with $Z \sub \bigcup_{n \in \w}F_n$. 
Recall that $\k^\w$ has a basis of sets of the form
$$\tr{s} \,=\, \set{x \in \k^\w}{x \rest (\mathrm{length}(s) = s)}$$
where $s \in \k^{<\w}$. 
Fix $n \in \w$. 
For each basic (cl)open set $\tr{s}$, there is a basic (cl)open set $\tr{t} \sub \tr{s}$ such that $\tr{t} \cap F_n = \0$, because $F_n$ is nowhere dense.  
For each $n \in \w$, fix a function $g_n: \k^{<\w} \to \k^{<\w}$ choosing some such $t$ for each $s$: i.e., for each $s \in \k^{<\w}$, $\tr{g_n(s)} \sub \tr{s}$ and $\tr{g_n(s)} \cap F_n = \0$. 

By Lemma~\ref{lem:Jech}, 
$$\C \,=\, \set{A \in [\k]^\w}{\text{for any $n \in \w$, if } x \in A^{<\w} \text{ then } g_n(s) \in A^{<\w} }$$ 
is a club subset of $[\k]^\w$. Using the fact that $\mathcal S$ is stationary, fix $A \in \mathcal S \cap \C$. 

For any $s \in \k^{<\w}$, obesrve that $\tr{s} \cap A^\w \neq \0$ if and only if $s \in A^{<\w}$. So the basic (cl)open neighborhoods of $A^\w$ are the sets of the form $\tr{s} \cap A$ with $s \in A^{<\w}$. 
Fix $n \in \w$. 
If $s \in A^{<\w}$, then $g_n(s) \in A^{<\w}$, and therefore $\tr{g_n(s)} \cap A^\w$ is a nonempty (cl)open subset of $A^\w$ such that $\tr{g_n(s)} \sub \tr{s}$ and $\tr{g_n(s)} \cap F_n = \0$. As $s$ was arbitrary, this shows that $F_n \cap A^\w$ is nowhere dense in $A^\w$. Because this is true for all $n$ and $Z \sub \bigcup_{n \in \w}F_n$, this shows that $Z \cap A^\w$ is meager in $A^\w$. 
This is a contradiction, because $Z \supseteq Z_A$ and $Z_A$ is non-meager in $A^\w$. 

Hence $Z$ is a non-meager subset of $\k^\w$ with size $\non M \cdot \cf[\k]^\w$, and therefore $\mathrm{non}(\M_{\k^\w}) \leq \non M \cdot \cf[\k]^\w$.
\end{proof}

\begin{theorem}\label{thm:MetUnif}
Let $X$ be a completely metrizable space, and let $\k = w_\downarrow(X)$. Then $\mathrm{non}(\M_X) = \non M \cdot \cf[\k]^\w$.
\end{theorem}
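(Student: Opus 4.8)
The plan is to reduce the general completely metrizable case to the uniform-weight case already handled by Theorem~\ref{thm:UW}, using a maximal cellular family to decompose $X$ into pieces of controlled weight, exactly in the style of the proofs of the additivity and covering theorems in Section~\ref{sec:covering}. First I would observe that the collection
$$\B \,=\, \set{U \sub X}{U \text{ is open}, U \neq \0, \text{ and } w_\downarrow(U) = w(U)}$$
is a $\pi$-base for $X$: given any nonempty open $V$, pass to a nonempty open $U \sub V$ of minimal weight among open subsets of $V$, and then $w_\downarrow(U) = w(U)$. By Lemma~\ref{lem:CellularFamilies} there is a maximal cellular family $\mathcal S \sub \B$. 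Each $U \in \mathcal S$ is an open subset of a completely metrizable space, hence completely metrizable, and has uniform weight $w(U) \geq \k$ (since $w_\downarrow(X) = \k$ forces $w(U) \geq \k$, and by $\pi$-minimality we may in addition arrange that some $U \in \mathcal S$ has $w(U) = \k$; more carefully, $\B$ being a $\pi$-base means $w_\downarrow(X) = \min\set{w(U)}{U \in \B}$, so refining $\mathcal S$ if necessary we keep a member of weight exactly $\k$). Since $\mathcal S$ is maximal, $\bigcup \mathcal S$ is open dense and $F := X \setminus \bigcup \mathcal S$ is closed nowhere dense in $X$; moreover, because the members of $\mathcal S$ are open and pairwise disjoint, a set $Z \sub X$ is meager in $X$ if and only if $Z \cap U$ is meager in $U$ for every $U \in \mathcal S$ and ($Z \setminus \bigcup \mathcal S$ being automatically nowhere dense) there is no further condition.

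For the inequality $\mathrm{non}(\M_X) \leq \non M \cdot \cf[\k]^\w$, fix $U_0 \in \mathcal S$ with $w(U_0) = \k$. By Theorem~\ref{thm:UW} applied to $U_0$, which has uniform weight $\k$, there is a set $Z_0 \sub U_0$ non-meager in $U_0$ with $|Z_0| = \non M \cdot \cf[\k]^\w$. Since $U_0$ is open, any subset of $U_0$ non-meager in $U_0$ is non-meager in $X$; hence $\mathrm{non}(\M_X) \leq \non M \cdot \cf[\k]^\w$.

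For the reverse inequality $\non M \cdot \cf[\k]^\w \leq \mathrm{non}(\M_X)$, let $Z \sub X$ with $|Z| < \non M \cdot \cf[\k]^\w$; I must show $Z$ is meager in $X$. It suffices to show $Z \cap U$ is meager in $U$ for each $U \in \mathcal S$ (then glue over the cellular family and add the nowhere dense set $F$, exactly as in the covering and additivity proofs). Each such $U$ is completely metrizable with uniform weight $w(U) \geq \k$, so by Theorem~\ref{thm:UW}, $\mathrm{non}(\M_U) = \non M \cdot \cf[w(U)]^\w \geq \non M \cdot \cf[\k]^\w > |Z| \geq |Z \cap U|$; here the monotonicity $\cf[\k]^\w \leq \cf[\lambda]^\w$ for $\k \leq \lambda$ is used (a cofinal family in $[\lambda]^\w$ restricted to subsets of, equivalently intersected down to, a copy of $\k$ — or more simply: any $A \in [\k]^\w$ lies in $[\lambda]^\w$ and a witness $B \supseteq A$ with $B \in [\lambda]^\w$ can be intersected with $\k$). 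Therefore $Z \cap U$ is meager in $U$ for every $U \in \mathcal S$, and so $Z$ is meager in $X$.

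The only genuinely delicate point is the bookkeeping around weights: one must make sure that $\mathcal S$ simultaneously contains a member of weight exactly $\k = w_\downarrow(X)$ (used for the upper bound) and that every member has weight $\geq \k$ (used for the lower bound), and that Theorem~\ref{thm:UW} really does apply to each $U \in \mathcal S$ because $U$, being a nonempty open subset of $X$ lying in the $\pi$-base $\B$, is completely metrizable of uniform weight $w(U)$. I also need the elementary monotonicity of $\cf[\cdot]^\w$ in the second inequality; this is routine but should be stated explicitly. Everything else is a direct imitation of the cellular-family gluing arguments already carried out for $\mathrm{add}$ and $\mathrm{cov}$ earlier in the paper.
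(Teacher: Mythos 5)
Your proof is correct and takes essentially the same route as the paper's: the upper bound comes from applying Theorem~\ref{thm:UW} to a single nonempty open set of uniform weight $\k$, and the lower bound from the maximal cellular family of Lemma~\ref{lem:CellularFamilies}, Theorem~\ref{thm:UW} applied to each member (of uniform weight $\lambda \geq \k$), the monotonicity $\cf[\k]^\w \leq \cf[\lambda]^\w$, and the standard gluing over the cellular family together with the nowhere dense remainder $X \setminus \bigcup \mathcal S$. The only deviation is your insistence that the weight-$\k$ witness lie in $\mathcal S$, which is unnecessary (the paper simply fixes any nonempty open $U$ with $w(U) = w_\downarrow(U) = \k$) but harmless.
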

\begin{proof}
Fix some nonempty open $U \sub X$ with $w(U) = w_\downarrow(U) = \k$. By Theorem~\ref{thm:UW}, there is some $Z \sub U$ such that $Z$ is non-meager in $U$ and $|Z| = \non M \cdot \cf[\k]^\w$. But if $Z$ is non-meager in $U$ and $U$ is open in $X$, then $Z$ is non-meager in $X$. Thus $\mathrm{non}(\M_X) \leq \non M \cdot \cf[\k]^\w$. 

For the reverse inequality, fix some $Z \sub X$ with $|Z| < \non M \cdot \cf[\k]^\w$. By Lemma~\ref{lem:CellularFamilies}, there is a maximal cellular family $\mathcal S$ in $X$ such that $w_\downarrow(U) = w(U)$ for every $U \in \mathcal S$. 
For each $U \in \mathcal S$, we have $\lambda = w(U) \geq \k$, which implies $\cf[\lambda]^\w \geq \cf[\k]^\w$. 
Thus $|Z \cap U| < \non M \cdot \cf[\k]^\w \leq \non M \cdot [\lambda]^\w$, and it follows from Theorem~\ref{thm:UW} that $Z \cap U$ is meager in $U$. 

For each $U \in \mathcal S$, fix a sequence $F_1^U,F_2^U,F_3^U,\dots$ of nowhere dense sets with $\bigcup_{n \in \w}F_n \supseteq Z \cap U$. Because $\mathcal S$ is a cellular family, $F_n = \bigcup_{U \in \mathcal S}F_n^U$ is nowhere dense in $X$ for each $n$. Also, $F_0 = X \setminus \bigcup \mathcal S$ is nowhere dense, because $\mathcal S$ is a maximal cellular family. Thus $F_0,F_1,F_2,\dots$ is a sequence of nowhere dense subsets of $X$, and $Z \sub \bigcup_{n \in \w}F_n$. Hence $Z$ is meager, and $\mathrm{non}(\M_X) \geq \non M \cdot \cf[\k]^\w$.
\end{proof}

\begin{remark}\label{rem:pcf}
Shelah's Theorem~\ref{thm:Shelah} enters this proof as a tool for showing the inequality 
$$\mathrm{non}(\M_{\k^\w}) \,\leq\, \non M \cdot \cf[\k]^\w,$$
i.e., for showing that $\k^\w$ has a non-meager subset of size $\non M \cdot \cf[\k]^\w$. 
In summary, our idea was: let $\F$ be a stationary family in $[\k]^\w$, and for each $A \in \F$ let $Z_A$ be a non-meager subset of $A^\w$ with size $\non M$. (Note that $A^\w$ is just a copy of the Baire space, so it does in fact have a non-meager subset of this size.) Then $\bigcup_{A \in \F}Z_A$ is non-meager. 

This raises the question of whether we really need $\F$ to be stationary. Would any cofinal family have sufficed? 
The purpose of this remark is to give an example answering this question in the negative: if $\F$ is merely cofinal in $[\k]^\w$, then $\bigcup_{A \in \F}Z_A$ may not be non-meager in $\k^\w$. 

To see this, fix a cardinal $\k > \w$ and let
$\F \,=\, \set{A \in [\k]^\w}{\sup A \in A}.$ 
It is clear that $\F$ is cofinal in $[\k]^\w$. However, a little thought reveals that 
$\C = \set{A \in [\k]^\w}{\sup A \notin A}$ is club, and thus $\F$ is non-stationary. 

For each $A \in \F$, let 
$$U_A \,=\, \set{x \in A^\w}{\sup A \in \mathrm{range}(x)} \,=\, \textstyle \bigcup \set{\tr{s}}{\sup A \in \mathrm{range}(s)}.$$
This set is open in $A^\w$. Furthermore, if $\tr{s}$ is a basic (cl)open subset of $A^\w$, then $\tr{s \cat (\sup A)}$ is a basic (cl)open subset of $A^\w$ contained in $U_A \cap \tr{s}$. Thus $U_A$ is a dense open subset of $U_A$. Because $A^\w$ is just a copy of the Baire space, there is some $Z_A \sub U_A$ with $|Z_A| = \non M$. 

We claim that $\bigcup_{A \in \F}Z_A$ is meager in $\k^\w$. To see this, let
$$F_n \,=\, \set{x \in \k^\w}{x(n) = \sup \mathrm{range}(x)}.$$ 
Each $F_n$ is a closed subset of $\k^\w$, because
$$\k^\w \setminus F_n \,=\, \textstyle \bigcup \set{\tr{s}}{\mathrm{length}(s) > n \text{ and } s(n) \neq \sup \mathrm{range}(s)}.$$
Furthermore, if $\tr{s}$ is a basic (cl)open subset of $\k^\w$, then we can find some $s' \supseteq s$ with $\mathrm{length}(s') > n$, and then set $t = s' \cat \a$ for some $\a > \sup \mathrm{range}(s')$. Then $\tr{t}$ is a basic (cl)open subset of $\k^\w$ with $\tr{t} \sub \tr{s}$ and $\tr{t} \cap F_n = \0$. Thus each $F_n$ is nowhere dense in $\k^\w$. Consequently $\bigcup_{n \in \w}F_n$ is meager. Furthermore, $U_A \sub \bigcup_{n \in \w}F_n$ for each $A \in \F$, and this means that $Z = \bigcup_{A \in \F}Z_A \sub \bigcup_{A \in \F}U_A \sub \bigcup_{n \in \w}F_n$. Therefore $Z$ is meager. 
\hfill{\tiny $\square$}
\end{remark}

\section{The cofinality of $\M_X$ and $\nwd_X$}\label{sec:cofinality}

Given an ideal $\I$ on a set $X$, we say $\F \sub \I$ is \emph{cofinal} if  $\F$ is cofinal (in the usual sense of the word) in the poset $(\I,\sub)$; i.e., for every $A \in \I$ there is some $B \in \F$ such that $B \supseteq A$. 
Recall from the introduction that $\cof \I$ is the minimal size of a cofinal subset of $\I$. 

Given a topological space $X$, let
$$\nwd_X \,=\, \set{N \sub X}{N \text{ is nowhere dense in } X}.$$
If $X$ is a $T_1$ space with no isolated points then $\nwd_X$ is an ideal on $X$. It is not generally a $\s$-ideal. 

It is not difficult to see that $\mathrm{cov}(\nwd_X) = \mathrm{cov}(\mathcal M_X)$ whenever $\M_X$ is a $\s$-ideal, and therefore, by the results in the previous section,
$$\mathrm{cov}(\nwd_X) \,=\, \mathrm{cov}(\mathcal M_X) \,=\, \begin{cases}
\cov M &\text{ if } w_\downarrow(X) = \aleph_0 \\
\aleph_1 &\text{ if } w_\downarrow(X) > \aleph_0
\end{cases}$$
if $X$ is completely metrizable and has no isolated points. 
Using the fact that the density of a metric space is equal to its weight (see \cite[Theorem 4.1.15]{Engelking}), 
\begin{align*}
\mathrm{non}(\nwd_X) &\,=\, \min \set{|D|}{D \text{ is dense in a nonempty open $U \sub X$}} \\ 
&\,=\, \set{w(U)}{U \sub X \text{ is nonempty and open}} \,=\, w_\downarrow(X)
\end{align*}
for every metrizable space $X$ (whether it has isolated points or not). 
If $X$ is metrizable and has no isolated points, then taking $F_n$ to be a maximal $\nicefrac{1}{n}$-separated subset of $X$ for each $n$, we obtain a countable collection of nowhere dense sets whose union is dense. Thus 
$$\mathrm{add}(\nwd_X) \,=\, \aleph_0.$$
Hence, as with $\M_X$, three of the four cardinal invariants of $\nwd_X$ are determined, all but the cofinality.

In this section we prove some results and ask some questions concerning $\mathrm{cof}(\nwd_{\k^\w})$ and $\mathrm{cof}(\M_{\k^\w})$. The reason for focusing on the generalized Baire spaces $\k^\w$ is twofold: (1) these spaces played a fundamental part in our analysis of additivity, covering, and uniformity in the preceding two sections, and (2) already the spaces $\k^\w$ present significant difficulties and raise some interesting questions. 
The foremost of these questions is: 

\begin{question}\label{q:nwdvsM}
Is $\mathrm{cof}(\nwd_{\k^\w}) = \mathrm{cof}(\M_{\k^\w})$ for every $\k$?
\end{question}

For the case $\k = \w$ (the Baire space) the answer to this question is yes, by a theorem of Fremlin in \cite[Section 3]{Fremlin}. For uncountable $\k$ the question remains open (even for $\k = \w_1$). However, we do have the following inequality:

\begin{theorem}\label{thm:≤}
$\mathrm{cof}(\M_{\k^\w}) \leq \mathrm{cof}(\nwd_{\k^\w})$ for every $\k$.
\end{theorem}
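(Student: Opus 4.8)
The plan is to show that a cofinal family in $\nwd_{\k^\w}$ gives rise, in a canonical way, to a cofinal family in $\M_{\k^\w}$ of the same cardinality. The key point is that $\M_{\k^\w}$ is a $\s$-ideal generated by $\nwd_{\k^\w}$: every meager set is contained in a countable union of nowhere dense sets, and in fact in a countable union of \emph{closed} nowhere dense sets. So it would suffice to produce, from a cofinal $\F \sub \nwd_{\k^\w}$ of size $\l = \mathrm{cof}(\nwd_{\k^\w})$, a family of size $\l$ consisting of countable unions of members of $\F$ (or their closures) that is cofinal in $\M_{\k^\w}$. The naive attempt — take all countable unions of members of $\F$ — produces a family of size $\l^\w$, which is too big in general, so the real work is to control this.

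The main obstacle, then, is the cardinal arithmetic: we need $\mathrm{cof}(\M_{\k^\w}) \le \l$, not merely $\le \l^\w$. The way I would get around this is to exploit the fact that $\k^\w$ has a nice base indexed by $\k^{<\w}$, which has size $\k$, and that $\mathrm{cof}(\nwd_{\k^\w}) \ge \k$ (indeed $\ge \k^+$, but $\ge \k$ suffices here since a cofinal family must in particular be cofinal among singletons, or one can note $w_\downarrow(\k^\w) = \k$ so $\mathrm{non}(\nwd) = \k$ forces many sets). Given a meager $M \sub \k^\w$, fix closed nowhere dense $C_0, C_1, C_2, \dots$ with $M \sub \bigcup_n C_n$. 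The idea is that each $C_n$, being closed and nowhere dense, is determined by the clopen set $\k^\w \setminus C_n$, which is a union of basic clopen sets $\tr{s}$; so $C_n$ is coded by a subset $S_n \sub \k^{<\w}$, namely $C_n = \k^\w \setminus \bigcup_{s \in S_n}\tr{s}$. I would try to show that one can choose a single cofinal family $\F$ of closed nowhere dense sets so that membership of $M$ in $\bigcup_n C_n$ (for $C_n \in \F$) is controlled by only countably much information, and then that countably much information lives inside a set of size $\mathrm{cof}(\nwd)$'s worth of possibilities — but this requires care.

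Here is the cleaner route I would actually take. Start with a cofinal $\F \sub \nwd_{\k^\w}$ with $|\F| = \l$; by replacing each $N \in \F$ with $\closure{N}$ (still nowhere dense, still a superset) we may assume every member of $\F$ is closed nowhere dense. Now I claim that the family
$$\G \,=\, \set{\,\textstyle\bigcup_{n \in \w} C_n \,}{\,\seq{C_n}{n \in \w} \text{ is a sequence from } \F \text{ ``generated'' by a single } C \in \F\,}$$
can be arranged to have size $\l$. Concretely: for each $C \in \F$, the complement $\k^\w \setminus C = \bigcup\set{\tr{s}}{s \in S_C}$ for some $S_C \sub \k^{<\w}$; enumerate a countable set of parameters from $S_C$ (or use $C$ together with a fixed bijection $\w \to \w \times \w$) to recursively peel off a sequence of members of $\F$ — this is where I must be careful to make the recursion depend only on $C$ and on the fixed well-ordering of $\F$, not on new choices. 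If this can be done, then $|\G| \le |\F| = \l$, and $\G$ is cofinal in $\M_{\k^\w}$: given meager $M$, write $M \sub \bigcup_n D_n$ with each $D_n \in \nwd_{\k^\w}$ closed; find $C_n \in \F$ with $C_n \supseteq D_n$; then find a single $C \in \F$ "above" this sequence in the sense that $C$ codes (via the fixed recursion) a sequence of members of $\F$ each containing the corresponding $C_n$ — here one uses that $\F$ is cofinal and that $\k^\w$'s base has size $\k \le \l$, so a countable amount of coding data can be packaged into one nowhere dense set and then dominated. Then $M \sub \bigcup_n C_n \in \G$. The delicate step — the one I expect to be the crux — is arranging the "coding of a countable sequence from $\F$ by a single member of $\F$" so that it is both (i) genuinely recursive/canonical, giving $|\G| \le \l$, and (ii) cofinal, i.e. every countable sequence from $\F$ is dominated (coordinatewise, up to superset) by some coded sequence. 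I would handle (ii) by first noting that $\nwd_{\k^\w}$ is closed under the operation $N \mapsto N \cup \tr{s}_{\text{boundary adjustments}}$... more simply, by observing that for a fixed sequence $\seq{C_n}{n}$ there is a closed nowhere dense set $E$ such that from $E$ one can decode each $C_n$: e.g. code the pair $(s, n)$ with $s$ ranging over a base for $C_n$'s complement into a ``sparse'' subset of $\k^{<\w}$ and let $E$ be the closed nowhere dense set whose complement is exactly the union of those $\tr{s}$'s — then $E \in \nwd_{\k^\w}$, so some member of $\F$ contains $E$, and from that member (which determines a \emph{smaller} complement, hence a \emph{superset} of $E$, hence possibly \emph{loses} decoding information) ... this last subtlety is exactly the obstacle, and resolving it — perhaps by coding with \emph{closed} sets so that passing to supersets shrinks complements in a way that still retains enough of the code, or by a more clever single-set coding — is where the proof's real content lies.
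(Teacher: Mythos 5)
You have correctly isolated the crux --- that one must code a countable sequence of nowhere dense sets by a \emph{single} nowhere dense set in a canonical, monotone way, so that passing to a superset inside the cofinal family does not destroy the decoding --- but your proposal stops exactly there: the final paragraph concedes that your complement-based coding loses information under supersets and that resolving this ``is where the proof's real content lies.'' So there is a genuine gap: no working coding is produced, and the combinatorial codings you sketch (recording $S_C \sub \k^{<\w}$ with $\k^\w \setminus C = \bigcup_{s \in S_C}\tr{s}$) do fail for the reason you yourself identify, since enlarging $C$ shrinks the complement and erases the code.

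The missing idea is to code geometrically rather than combinatorially, using the self-similarity of $\k^\w$. Partition $\k^\w$ into the clopen sets $B_n = \set{x \in \k^\w}{x(0) = n}$ for $n \in \w$, and let $\phi_n : B_n \to \k^\w$ be the homeomorphism deleting the first coordinate. For $F \sub \k^\w$ put $\tilde F = \bigcup_{n \in \w}\phi_n[F \cap B_n]$. This decoding map sends nowhere dense sets to meager sets and is monotone: $F \sub E$ implies $\tilde F \sub \tilde E$, which is precisely what neutralizes your superset worry. Given a meager $Z \sub \bigcup_{n \in \w}F_n$ with each $F_n$ nowhere dense, the set $F = \bigcup_{n \in \w}\phi_n^{-1}(F_n)$ is nowhere dense (each piece lies in its own clopen $B_n$), and $\tilde F = \bigcup_{n \in \w}F_n \supseteq Z$; choosing $E$ in your cofinal family $\F \sub \nwd_{\k^\w}$ with $E \supseteq F$ gives $\tilde E \supseteq Z$. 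Hence $\{\tilde F : F \in \F\}$ is cofinal in $\M_{\k^\w}$ and has size at most $|\F|$, with no cardinal arithmetic ($\l^\w$, coding by subsets of $\k^{<\w}$, well-orderings of $\F$) needed at all. This is exactly the paper's proof.
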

\begin{proof}
For each $n \in \w$, let 
$B_n = \set{x \in \k^\w}{x(0) = n}$, and let
$\phi_n: B_n \to \k^\w$ be the natural homeomorphism sending $x \in B_n$ to the function $n \mapsto x(n+1)$ (in other words, $\phi_n$ deletes the first coordinate of $x$). 
Given $F \sub \k^\w$, define $\tilde F = \bigcup_{n \in \w} \phi_n[F \cap B_n]$. 
It is clear that the map $F \mapsto \tilde F$ sends nowhere dense subsets of $\k^\w$ to meager subsets of $\k^\w$. We claim that if $\F$ is a cofinal subset of $\nwd_{\k^\w}$, then $\{\tilde F :\, F \in \F\}$ is cofinal in $\M_{\k^\w}$. 

To see this, let $Z$ be a meager subset of $\k^\w$, and let $F_0,F_1,F_2,\dots$ be a sequence of nowhere dense sets covering $Z$. But then $F = \bigcup_{n \in \w} \phi_n^{-1}(F_n)$ is nowhere dense in $\k^\w$, and $\tilde F = \bigcup_{n \in \w}F_n \supseteq Z$. Because $\F$ is cofinal, there is some $E \in \F$ with $E \supseteq F$, and then $\tilde E \supseteq Z$.
Thus $\{\tilde F :\, F \in \F\}$ is cofinal in $\M_{\k^\w}$, and this shows $\mathrm{cof}(\M_{\k^\w}) \leq \mathrm{cof}(\nwd_{\k^\w})$.
\end{proof}

We next proceed to prove three lower bounds for $\mathrm{cof}(\M_{\k^\w})$ and one upper bound for $\mathrm{cof}(\nwd_{\k^\w})$: 
$$\k^+,\cf[\k]^\w,\cof M \,\leq\, \mathrm{cof}(\M_{\k^\w}) \,\leq\, \mathrm{cof}(\nwd_{\k^\w}) \,\leq\, 2^\k.$$
Two of these lower bounds ($\k^+$ and $\cf[\k]^\w$) are straightforward. The third requires the following lemma reminiscent of the Kuratowski-Ulam theorem.

\begin{lemma}\label{lem:Game}
Suppose $X$ and $Y$ are completely metrizable spaces, and let $\pi_Y$ denote the natural projection $X \times Y \to Y$. Then $A \sub Y$ is meager in $Y$ if and only if $\pi_Y^{-1}(A)$ is meager in $X \times Y$.
\end{lemma}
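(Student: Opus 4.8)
The plan is to prove the two implications separately, with the nontrivial direction being: if $A \sub Y$ is non-meager in $Y$, then $\pi_Y^{-1}(A)$ is non-meager in $X \times Y$. The easy direction is that if $A$ is meager in $Y$, say $A \sub \bigcup_{n \in \w} N_n$ with each $N_n$ nowhere dense in $Y$, then $\pi_Y^{-1}(A) \sub \bigcup_{n \in \w} \pi_Y^{-1}(N_n)$ and each $\pi_Y^{-1}(N_n) = X \times N_n$ is nowhere dense in $X \times Y$ (its closure is $X \times \closure{N_n}$, which has empty interior); hence $\pi_Y^{-1}(A)$ is meager. This uses nothing about complete metrizability.

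For the hard direction, suppose $\pi_Y^{-1}(A)$ is meager in $X \times Y$, witnessed by closed nowhere dense sets $F_n \sub X \times Y$ with $\pi_Y^{-1}(A) \sub \bigcup_{n \in \w} F_n$; I want to show $A$ is meager in $Y$. The natural approach is a Banach--Mazur game argument, or equivalently a direct category computation using complete metrizability of $X$. Fix complete metrics on $X$ and $Y$. For each $n$, define
$$ V_n \,=\, \set{y \in Y}{(X \times \{y\}) \setminus F_n \text{ is dense in } X \times \{y\}}, $$
i.e., the set of $y$ such that the vertical fiber $F_n^y = \set{x \in X}{(x,y) \in F_n}$ is nowhere dense in $X$. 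The key claim is that each $V_n$ is \emph{co-meager} in $Y$: given a nonempty open $W \sub Y$, I must find a nonempty open $W' \sub W$ and a countable intersection of dense open subsets of $W'$ lying in $V_n$. This follows from a standard fiber argument: since $F_n$ is nowhere dense in $X \times Y$, for each nonempty open box $U \times W \sub X \times Y$ there is a smaller box $U' \times W'$ disjoint from $F_n$; iterating over a countable base for $X$ and using the Baire property of $Y$ (here is where complete metrizability of $Y$ is used) one shows $\set{y \in W}{F_n^y \cap U \neq \0 \text{ for all open } U}$ is meager in $W$, giving that $V_n \cap W$ is co-meager in $W$; hence $V_n$ is co-meager in $Y$.

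Granting the claim, let $V = \bigcap_{n \in \w} V_n$, which is co-meager in $Y$, and let $C = Y \setminus V$, which is meager. For any $y \in V$, every fiber $F_n^y$ is nowhere dense in $X$, so $\bigcup_{n \in \w} F_n^y$ is meager in $X$; since $X$ is completely metrizable it is a Baire space, so $\bigcup_{n \in \w} F_n^y \neq X$, meaning there is some $x \in X$ with $(x,y) \notin \bigcup_n F_n$, i.e., $(x,y) \notin \pi_Y^{-1}(A)$, i.e., $y \notin A$. Therefore $A \cap V = \0$, so $A \sub C$ is meager in $Y$, as desired.

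The main obstacle is the co-meagerness claim for $V_n$ — specifically, organizing the fiber-wise nowhere-density of a nowhere dense product-set into a statement about the $y$-section, which is exactly the Kuratowski--Ulam phenomenon. The cleanest way to handle it may be to quote the Kuratowski--Ulam theorem directly in the form: \emph{if $F$ is a nowhere dense subset of a product $X \times Y$ of second countable (or, with care, arbitrary complete metric) spaces with the Baire property, then $\set{y}{F^y \text{ is not nowhere dense in } X}$ is meager in $Y$}. Since $X$ here may fail to be second countable, one either invokes the general version of Kuratowski--Ulam valid when $X$ has a $\s$-discrete base (Bing metrization, as used in Lemma~\ref{lem:kw}) or runs the Banach--Mazur game: Player II wins the game for the co-meager set $V_n$ by shrinking the $X$-coordinate to avoid $F_n$ at each stage while letting Player I control the $Y$-coordinate, and a winning strategy for II in the Banach--Mazur game on $Y$ characterizes co-meagerness. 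I would present the game version, as it sidesteps any subtlety about bases of $X$ and uses only that $X$ and $Y$ are Baire spaces with the relevant completeness to make the game work.
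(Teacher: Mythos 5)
Your easy direction is fine and matches the paper, but the hard direction rests on a claim that is false in exactly the setting this lemma is needed for. The key step --- that each $V_n = \set{y \in Y}{F_n^y \text{ is nowhere dense in } X}$ is comeager in $Y$ --- is the Kuratowski--Ulam property with sections living in $X$, and that property genuinely requires $X$ to have countable $\pi$-weight; in the paper the lemma is applied with $X = \k^\w$ for uncountable $\k$. Concretely, let $\k = \cov M$, fix closed nowhere dense sets $\set{N_\a}{\a < \k}$ covering $\R$, let $Y = \R$, and let $X$ be either the discrete space of size $\k$ or $\k^\w$. Put $F = \bigcup_{\a < \k} \tr{\<\a\>} \times N_\a$ (with $\tr{\<\a\>}$ replaced by $\{\a\}$ in the discrete case). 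Then $F$ is closed and nowhere dense in $X \times Y$, but for every $y \in \R$ the fiber $F^y$ contains a nonempty open set (is nonempty, in the discrete case), so $\set{y}{F^y \text{ is nowhere dense in } X}$ is empty rather than comeager. The same example shows that the ``general Kuratowski--Ulam theorem for $X$ with a $\s$-discrete base'' you invoke as a fallback does not exist in the form you need (a discrete space has a $\s$-discrete base), and your Banach--Mazur fallback is aimed at proving the very same false statement about $V_n$, so it cannot be repaired as described. Note also that your closing remark that only Baireness of $X$ is used is not borne out: the correct argument uses completeness of $X$ in an essential way.

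What is true, and what the paper proves, is only the weaker statement you actually need: if the $F_n$ are nowhere dense in $X \times Y$, then $\set{y \in Y}{X \times \{y\} \sub \bigcup_{n \in \w} F_n}$ is meager in $Y$; equivalently, if $\pi_Y^{-1}(A)$ is meager then $A$ is meager. The paper obtains this by playing the Banach--Mazur game on $Y$ with target $A$ itself, not by a fiberwise reduction: the player \MISS simulates an auxiliary run of $\mathsf{BM}(X \times Y, \pi_Y^{-1}(A))$ using a winning strategy there, always refining to rectangles $O_X^n \times O_Y^n$ whose $X$-sides have diameters tending to $0$ and nested closures, and answers with $O_Y^n$ in the real game on $Y$. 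Completeness of $X$ then pins down a single point $x \in \bigcap_{n} O_X^n$, so any $y$ surviving the real game yields a pair $(x,y)$ surviving the auxiliary game, whence $y \notin A$. If you want to salvage your write-up, you must either run this strategy-transfer argument or reformulate your claim about the single ``bad'' set $\set{y}{X \times \{y\} \sub \bigcup_n F_n}$ directly; any route through comeagerness of the sets $V_n$ is blocked by the counterexample above.
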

\begin{proof}
For the ``only if'' direction of the lemma, suppose $A$ is a meager subset of $Y$. Let $F_0,F_1,F_2,\dots$ be a sequence of nowhere dense subsets of $Y$ with $A \sub \bigcup_{n \in \w}F_n$. But then $\pi_Y^{-1}(F_0), \pi_Y^{-1}(F_1), \pi_Y^{-1}(F_2),\dots$ is a sequence of nowhere dense subsets of $X \times Y$ and $\pi_Y^{-1}(A) \sub \bigcup_{n \in \w}\pi_Y^{-1}(F_n)$. Thus, if $A$ is meager then $\pi_Y^{-1}(A)$ is meager too.

For the ``if'' direction, we make use of the Banach-Mazur games $\mathsf{BM}(Y,A)$ and $\mathsf{BM}(X \times Y,\pi_Y^{-1}(A))$. 
Recall that the game $\mathsf{BM}(Y,A)$ is played in $\w$ rounds, where two players, \HIT and {\small M}{\scriptsize ISS}, take turns choosing members of an infinite decreasing sequence of open subsets of $Y$ as follows:

\begin{center}
\begin{tikzpicture}[xscale=.95]

\node at (-1.6,1) {\scalebox{.85}{{\small H}{\scriptsize IT}}};
\node at (-1.671,0) {\scalebox{.85}{{\small M}{\scriptsize ISS}}};
\node at (0,1) {\small $U_0$};
\node at (1,0) {\small $V_0$};
\node at (2,1) {\small $U_1$};
\node at (3,0) {\small $V_1$};
\node at (4,1) {\small $U_2$};
\node at (5,.5) {$\dots$};
\node at (6,0) {\small $V_{n-1}$};
\node at (7,1) {\small $U_n$};
\node at (8,0) {\small $V_n$};
\node at (9,1) {\small $U_{n+1}$};
\node at (10,.5) {$\dots$};
\node at (.5,.5) {\footnotesize \rotatebox{315}{$\supseteq$}};
\node at (1.5,.5) {\footnotesize \rotatebox{45}{$\supseteq$}};
\node at (2.5,.5) {\footnotesize \rotatebox{315}{$\supseteq$}};
\node at (3.5,.5) {\footnotesize \rotatebox{45}{$\supseteq$}};
\node at (6.5,.5) {\footnotesize \rotatebox{45}{$\supseteq$}};
\node at (7.5,.5) {\footnotesize \rotatebox{315}{$\supseteq$}};
\node at (8.5,.5) {\footnotesize \rotatebox{45}{$\supseteq$}};

\end{tikzpicture}
\end{center}

\noindent In the end, \MISS wins if and only if $A \cap \bigcap_{n=1}^\infty U_n = A \cap \bigcap_{n=1}^\infty V_n = \0$. 
A \emph{strategy} for \MISS is a function $\t$ whose domain consists of possible finite partial plays of the game, i.e. decreasing sequences $\< U_0,V_0,\dots,U_{n-1},V_{n-1},U_n\>$ of nonempty open subsets of $Y$, such that $\t(\< U_0,V_0,\dots,U_{n-1},V_{n-1},U_n\>)$ is a nonempty open subset of $U_n$. 
We say $\t$ is a \emph{winning strategy} for \MISS if playing $V_n = \t(\< U_0,V_0,\dots,U_{n-1},V_{n-1},U_n\>)$ in every round of $\mathsf{BM}(Y,A)$ results in \MISS winning the game. Of course, all these terms are defined similarly for $\mathsf{BM}(X \times Y,\pi_Y^{-1}(A))$ as well.

Recall that \MISS has a winning strategy in the game $\mathsf{BM}(Y,A)$ if and only if $A$ is meager in $Y$, and similarly, \MISS has a winning strategy in $\mathsf{BM}(X \times Y,\pi_Y^{-1}(A))$ if and only if $\pi_Y^{-1}(A)$ is meager in $X \times Y$. 
Thus, to finish the proof of the lemma, it suffices to show that if \MISS has a winning strategy in $\mathsf{BM}(X \times Y,\pi_Y^{-1}(A))$, then \MISS has a winning strategy in $\mathsf{BM}(Y,A)$.

To do this, suppose \MISS has a winning strategy $\t$ in $\mathsf{BM}(X \times Y,\pi_Y^{-1}(A))$. 
We will describe how to use this winning strategy in $\mathsf{BM}(X \times Y,\pi_Y^{-1}(A))$ to create a winning strategy for \MISS in $\mathsf{BM}(Y,A)$. 
For the sake of clarity we do not explicitly write the strategy in $\mathsf{BM}(Y,A)$ as a function: we simply describe how \MISS should play in $\mathsf{BM}(Y,A)$. 

In round $0$ of the game $\mathsf{BM}(Y,A)$, \HIT plays some $U_0 \sub Y$. \MISS pretends that \HIT has just played $\tilde U_0 = \pi_Y^{-1}(U_0)$ in the game $\mathsf{BM}(X \times Y,\pi_Y^{-1}(A))$, and responds with some $\t(\tilde U_0) = \tilde V_0 \sub \tilde U_0$, according to {\small M}{\scriptsize ISS}'s winning strategy for that game. Next, we find open sets $O_X^0 \sub X$ and $O_Y^0 \sub Y$ such that 
\begin{enumerate}
\item the diameter of $O_X^0$ is $<\! 1$, and
\item $O_X^0 \times O_Y^0 \sub \tilde V_0$. 
\end{enumerate}
Finally, we return to the game $\mathsf{BM}(Y,A)$ and \MISS plays $V_0 = O_Y^0$. This is a legal play for \MISS in $\mathsf{BM}(Y,A)$, because $O_X^0 \times O_Y^0 \sub \tilde V_0 \sub \tilde U_0 = \pi_Y^{-1}(U_0)$ and this implies $O_Y^0 \sub U_0$. 

(Observe that, if we really want a strategy for {\small M}{\scriptsize ISS}, we need a function mapping $U_0$ to $V_0$, and the vague phrase ``find open subsets $O_X^0 \sub X$ and $O_Y^0 \sub Y$ . . .'' doesn't really define a function. However, we can get around this simply by fixing beforehand some choice function $F$ that can always ``find'' such a pair in any such situation, and then the new strategy is a well-defined function, whose definition uses $F$ as a parameter.)

Subsequent rounds of the game proceed similarly. In round $n$ of the game, \MISS has already played some rectangle $O_X^{n-1} \times O^{n-1}_Y$ in round $n-1$ of the auxiliary game $\mathsf{BM}(X \times Y,\pi^{-1}(A))$, and then played $V_{n-1} = O^{n-1}_Y$ in round $n-1$ of $\mathsf{BM}(Y,A)$. 
Now suppose \HIT plays some $U_n \sub V_{n-1}$ to begin round $n$ of $\mathsf{BM}(Y,A)$. 
Similarly to what happened in round $0$, \MISS pretends that \HIT has just played $\tilde U_n = \pi_Y^{-1}(U_n) \cap (O_X^{n-1} \times O^{n-1}_Y)$ in round $n$ of the auxiliary game $\mathsf{BM}(X \times Y,\pi_Y^{-1}(A))$. Using the winning strategy $\t$ for that game, \MISS determines a response $\t(\tilde U_n) = \tilde V_n \sub \tilde U_n$. We then find open sets $O_X^n \sub X$ and $O_Y^n \sub Y$ such that 
\begin{enumerate}
\item the diameter of $O_X^n$ is $<\!\nicefrac{1}{n}$,
\item $\closure{O_X^n} \sub O_X^{n-1}$, and
\item ${O_X^n \times O_Y^n} \sub \tilde V_n$. 
\end{enumerate}
Finally, we return to the game $\mathsf{BM}(Y,A)$ and \MISS plays $V_n = O_Y^n$. 

We claim that this is a winning strategy for \MISS in $\mathsf{BM}(Y,A)$. 
To see this, first observe that $\< \tilde U_0,\tilde V_0,\tilde U_1,\tilde V_1, \tilde U_2,\tilde V_2,\dots\>$ is a legal play of the game $\mathsf{BM}(X \times Y,\pi^{-1}(A)_Y)$ in which \MISS uses the winning strategy $\t$. Thus 
$$\textstyle \bigcap_{n=1}^\infty \tilde U_n \,=\, \bigcap_{n=1}^\infty \tilde V_n \,=\, \bigcap_{n=1}^\infty (O^n_X \times O^n_Y)$$ 
does not contain any points of $\pi^{-1}_Y(A)$. 

Requirements $(1)$ and $(2)$ on the sets $O^n_X$, together with the fact that $X$ is completely metrizable, ensure that $\bigcap_{n=1}^\infty O^n_X \neq \0$, and in fact, this set contains a single point, say $x$. 
But then $y \in \bigcap_{n=1}^\infty V_n = \bigcap_{n=1}^\infty O^n_Y$ if and only if $(x,y) \in \bigcap_{n=1}^\infty O^n_X \times O^n_Y$. 
By the previous paragraph, this means that if $(x,y) \in \bigcap_{n=1}^\infty O^n_X \times O^n_Y$ then $(x,y) \notin \pi^{-1}_Y(A)$, i.e., $y \notin A$. 
Thus if $y \in \bigcap_{n=1}^\infty V_n$ then $y \notin A$. In other words, playing in this way has resulted in a win for \MISS in $\mathsf{BM}(Y,A)$. 
\end{proof}

\begin{theorem}
For any infinite cardinal $\k$,
$$\k^+, \cf[\k]^\w, \cof \M \,\leq\, \mathrm{cof}(\M_{\k^\w}) \,\leq\, \mathrm{cof}(\nwd_{\k^\w}) \,\leq\, 2^\k.$$
\end{theorem}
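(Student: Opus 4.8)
The plan is to verify the chain of inequalities one link at a time. The middle inequality $\mathrm{cof}(\M_{\k^\w}) \leq \mathrm{cof}(\nwd_{\k^\w})$ is exactly Theorem~\ref{thm:≤}, so nothing new is needed there. For the rightmost inequality $\mathrm{cof}(\nwd_{\k^\w}) \leq 2^\k$: every nowhere dense subset of $\k^\w$ is contained in its closure, which is a closed nowhere dense set, so the closed nowhere dense sets form a cofinal family in $\nwd_{\k^\w}$; there are at most $2^\k$ closed subsets of $\k^\w$ (each is determined by which basic clopen sets $\tr{s}$, $s \in \k^{<\w}$, it meets, and $|\k^{<\w}| = \k$), so $\mathrm{cof}(\nwd_{\k^\w}) \leq 2^\k$.

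It remains to establish the three lower bounds for $\mathrm{cof}(\M_{\k^\w})$. First, $\cf[\k]^\w \leq \mathrm{cof}(\M_{\k^\w})$: if $\F$ is cofinal in $\M_{\k^\w}$, I would attach to each $M \in \F$ a countable set $A_M \in [\k]^\w$ witnessing that $M$ is meager — for instance, fixing closed nowhere dense sets $F^M_n$ with $M \sub \bigcup_n F^M_n$, and using the $g_n$-functions from the proof of Theorem~\ref{thm:UW} to close off a countable set $A_M$ so that $\bigcup_n F^M_n$ restricted to $A_M^\w$ is meager in $A_M^\w$. Then the sets $A_M$ must be cofinal in $[\k]^\w$: if not, pick $B \in [\k]^\w$ with $B \not\subseteq A_M$ for all $M$, and then (as in the $\cf[\k]^\w$ lower bound in the proof of Theorem~\ref{thm:UW}) the single point $x \in \k^\w$ that enumerates $B$ and is generic enough — more carefully, one takes a non-meager subset $Z_B$ of $B^\w$ of size $\non\M$, which is non-meager in $\k^\w$ by the argument in Theorem~\ref{thm:UW}, and no $M \in \F$ with $A_M \not\supseteq B$ can contain $Z_B$, contradicting cofinality. (The cleanest route is simply: the map $M \mapsto A_M$ sends a cofinal family in $\M_{\k^\w}$ to a cofinal family in $[\k]^\w$, by the stationary-set argument of Theorem~\ref{thm:UW} applied with the singleton stationary-ish witness.) Second, $\k^+ \leq \mathrm{cof}(\M_{\k^\w})$: the sets $F_\a = \set{x \in \k^\w}{\a \notin \mathrm{range}(x)}$ for $\a < \k$ are nowhere dense, hence meager; given any family $\F \sub \M_{\k^\w}$ of size $\leq \k$, I would show $\F$ is not cofinal by a diagonalization — each $M \in \F$ is contained in a countable union of closed nowhere dense sets, and since $|\F| \leq \k$, one can find, for each $M$, a basic clopen set avoiding all of $M$'s pieces appropriately, and splice these together to build a single closed nowhere dense (indeed a single $F_\a$-like) set not covered by any $M \in \F$; concretely, since there are $\k$ many $M$'s, enumerate them as $\seq{M_\b}{\b < \k}$, and build a point or a nowhere dense set diagonalizing against each in turn. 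Third, $\cof\M \leq \mathrm{cof}(\M_{\k^\w})$: write $\k^\w \homeo \k^\w \times \w^\w$ with $\pi$ the projection onto $\w^\w$; by Lemma~\ref{lem:Game}, $A \sub \w^\w$ is meager iff $\pi^{-1}(A)$ is meager in $\k^\w \times \w^\w$; so if $\F$ is cofinal in $\M_{\k^\w \times \w^\w}$, then $\set{\pi^{-1}(A)}{A \in \M_{\w^\w}}$ must be "dominated" by $\F$, and from this I extract a cofinal family in $\M_{\w^\w} = \M$ of size $\leq |\F|$ — given $A$ meager in $\w^\w$, find $M \in \F$ with $M \supseteq \pi^{-1}(A)$; then the "large section" trick via Lemma~\ref{lem:Game} shows $\set{y}{\text{the section } M_y \text{ is non-meager}}^c$, or rather the set of $y$ such that $\pi^{-1}(\{y\})$-fiber behavior forces $y$ into a canonical meager set $\hat M$ computable from $M$, gives a cofinal family; the point is that $M \mapsto \pi[M \cap (\k^\w \times \{pt\})]$-type constructions are too crude, so instead one uses that $\pi^{-1}(A) \sub M$ implies $A \sub \set{y}{\pi^{-1}(\{y\}) \sub M}$ and the latter set is meager in $\w^\w$ by Lemma~\ref{lem:Game} (its preimage $\pi^{-1}$ of it is contained in $M$, hence meager).

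The main obstacle I expect is the $\cof\M \leq \mathrm{cof}(\M_{\k^\w})$ inequality: extracting from a cofinal family upstairs a cofinal family downstairs requires a "canonical meagerization" of each $M \in \F$ — replacing $M$ by a meager set of $\w^\w$ that still dominates every $A$ with $\pi^{-1}(A) \sub M$ — and getting this to be genuinely cofinal (not just cofinal on a club) is where Lemma~\ref{lem:Game} does the real work. The $\k^+$ bound is "straightforward" as the excerpt promises but still needs a careful diagonalization keeping track of $\k$-many meager sets; the $\cf[\k]^\w$ bound follows almost immediately from the machinery already built in Theorem~\ref{thm:UW}. I would present the four inequalities in the order: middle (cite Theorem~\ref{thm:≤}), then $\leq 2^\k$, then $\cf[\k]^\w$, then $\k^+$, then $\cof\M$, so that the hardest argument comes last.
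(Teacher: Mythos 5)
The parts of your plan covering the middle inequality, the bound $\mathrm{cof}(\nwd_{\k^\w}) \leq 2^\k$, and the bound $\cof M \leq \mathrm{cof}(\M_{\k^\w})$ are correct and essentially identical to the paper's proof; in particular your closing formulation of the $\cof M$ argument --- replace $M$ by $\set{y \in \w^\w}{\pi^{-1}(\{y\}) \sub M}$, which is meager because its $\pi$-preimage is contained in $M$, by Lemma~\ref{lem:Game} --- is exactly the paper's map $M \mapsto \tilde M$. The gaps are in the two remaining lower bounds.

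For $\cf[\k]^\w \leq \mathrm{cof}(\M_{\k^\w})$, your argument rests on the claim that a non-meager subset $Z_B$ of $B^\w$ is non-meager in $\k^\w$. That is false: for $B \in [\k]^\w$ the set $B^\w$ is closed and nowhere dense in $\k^\w$ (every $\tr{s}$ contains $\tr{s \cat \a}$ with $\a \notin B$), so $Z_B$ is meager in $\k^\w$; in Theorem~\ref{thm:UW} non-meagerness comes only from the union of the $Z_A$ over a stationary family, never from a single $A$. Moreover, even after finding $M \in \F$ with $M \supseteq Z_B$, nothing forces $A_M \supseteq B$: closing $A_M$ off under the functions $g_n$ only controls $M \cap A_M^\w$, and $M$ could contain all of $B^\w$ while $A_M \cap B = \0$, so the map $M \mapsto A_M$ you describe need not carry cofinal families to cofinal families. (It can be repaired by instead setting $A_M = \set{\a < \k}{F_\a \sub M}$, which is countable because $\bigcup_{\a \in A}F_\a = \k^\w$ for uncountable $A$, and which contains $B$ whenever $M \supseteq \bigcup_{\a \in B}F_\a$.) The paper sidesteps all of this with one line: $\cf[\k]^\w \leq \mathrm{non}(\M_{\k^\w}) \leq \mathrm{cof}(\M_{\k^\w})$, the first inequality by Theorem~\ref{thm:UW} and the second being the standard $\mathrm{non} \leq \mathrm{cof}$ inequality for proper $\s$-ideals.

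For $\k^+ \leq \mathrm{cof}(\M_{\k^\w})$, both of your suggestions are incomplete. A single ``$F_\a$-like'' witness need not exist: for each meager $M$ the set $\set{\a}{F_\a \sub M}$ is countable, but the union of $\k$ many countable subsets of $\k$ can be all of $\k$, so there may be no one $\a$ that works against every $M \in \F$. And in the pointwise diagonalization you never ensure that the diagonal set $\set{x_\b}{\b < \k}$ is itself meager --- this is not automatic, since $\mathrm{non}(\M_{\k^\w}) = \non M \cdot \cf[\k]^\w$ can equal $\k$ (e.g. $\k = \w_1$ when $\non M = \aleph_1$), so an arbitrary $\k$-sized set of points may be non-meager and hence no witness at all. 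The paper's proof supplies the missing idea: enumerate $\F = \set{M_\a}{\a < \k}$, let $B_\a = \set{x \in \k^\w}{x(0) = \a}$, and pick $x_\a \in B_\a \setminus M_\a$ (possible since $B_\a$ is nonempty open and $\k^\w$ is Baire); because the $B_\a$ are pairwise disjoint clopen sets, $\set{x_\a}{\a < \k}$ is closed and discrete, hence nowhere dense, yet it is contained in no member of $\F$.
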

\begin{proof}
To see that $\k^+ \leq \mathrm{cof}(\M_{\k^\w})$, let $\F$ be a family of meager subsets of $\k^\w$ with $|\F| = \k$. Enumerate $\F = \set{F_\a}{\a < \k}$, and for each $\a < \k$ let 
$B_\a = \set{x \in \k^\w}{x(0) = \a}$. 
Because each $B_\a$ is clopen in $\k^\w$ and $F_\a$ is nowhere dense, for each $\a$ there is some $x_\a \in B_\a \setminus F_\a$. But then $\set{x_\a}{\a < \k}$ is meager (in fact it is a closed discrete set) and is not contained in any member of $\F$. Thus $\F$ is not cofinal. 

To see that $\cf[\k]^\w \leq \mathrm{cof}(\M_{\k^\w})$, simply note that $\mathrm{non}(\M_{\k^\w}) \leq \mathrm{cof}(\M_{\k^\w})$, and $\cf[\k]^\w \leq \mathrm{non}(\M_{\k^\w})$ by Theorem~\ref{thm:UW}.

To see that $\cof M \leq \mathrm{cof}(\M_{\k^\w})$, first note, as in the proof of Theorem~\ref{thm:UW}, that $\k^\w \homeo (\k \times \w)^\w \homeo \k^\w \times \w^\w$. Let $\pi: \k^\w \times \w^\w \to \w^\w$ denote the natural projection. 
Given $A \sub \k^\w \times \w^\w$, define 
$\tilde A = \set{x \in \w^\w}{\pi^{-1}(x) \sub A}.$ 

If $A \sub \k^\w \times \w^\w$, then $A \supseteq \pi^{-1}(\tilde A)$. Using this fact, Lemma~\ref{lem:Game} implies that if $A$ is meager in $\k^\w \times \w^\w$ then $\tilde A$ is meager in $\w^\w$. 

Now suppose $\F$ is a cofinal subset of $\M_{\k^\w \times \w^\w}$. By the previous paragraph, $\{ \tilde A :\, A \in \F \}$ is a subset of $\M_{\w^\w}$. We claim that it is also cofinal. 
To see this, let $X \sub \w^\w$ be meager. 
By the easy direction of Lemma~\ref{lem:Game}, $\pi^{-1}(X)$ is meager in $\k^\w \times \w^\w$. 
Consequently, there is some $A \in \F$ with $A \supseteq \pi^{-1}(X)$, and this implies $\tilde A \supseteq X$. 
Thus $\{ \tilde A :\, A \in \F \}$ is a cofinal subset of $\M_{\w^\w}$, and it follows that $\cof M \leq \mathrm{cof}(\M_{\k^\w \times \w^\w}) = \mathrm{cof}(\M_{\k^\w})$. 

Finally, we wish to show that $\mathrm{cof}(\nwd_{\k^\w}) \leq 2^\k$. This is straightforward: $2^\k$ has a basis of size $\k$ (the sets of the form $\tr{s}$) and thus has $2^\k$ open sets and $2^\k$ closed sets. The inequality follows, because the collection of nowhere dense closed sets is cofinal in $\nwd_X$.
\end{proof}

Let us note that these three lower bounds are independent of one another, in the sense that any of $\k^+$, $\cf[\k]^\w$, or $\cof M$ can be strictly larger than the other two, for certain values of $\k$ in certain models of set theory. 

If \gch holds, this theorem implies $\k^+ = 2^\k = \mathrm{cof}(\M_{\k^\w}) = \mathrm{cof}(\nwd_{\k^\w})$ for every $\k$. This gives a consistent positive answer to Question~\ref{q:nwdvsM}. 
Combining this observation with Theorem~\ref{thm:≤}, the open part of Question~\ref{q:nwdvsM} is whether it is consistent to have $\mathrm{cof}(\M_{\k^\w}) < \mathrm{cof}(\nwd_{\k^\w})$ for some $\k$. 

\begin{question}
Is it consistent that $\max\{ \k^+,\cf[\k]^\w,\cof M \} < \mathrm{cof}(\M_{\k^\w})$ for some infinite cardinal $\k$? What about $\k = \w_1$?
\end{question}

\begin{question}\label{q:aa}
Is it consistent to have $\mathrm{cof}(\M_{\k^\w}) < 2^\k$ for some infinite cardinal $\k$? What about $\mathrm{cof}(\nwd_{\k^\w}) < 2^\k$?
\end{question}

We end this section by proving a lower bound for $\mathrm{cof}(\nwd_{\k^\w})$. 
Consider the poset $\w^\k$ of functions $\k \to \w$ ordered by 
$$f \leq g \qquad \text{ if and only if } \qquad f(\a) \leq g(\a) \text{ for all } \a < \k.$$
Let $\mathrm{cof}(\w^\k,\leq)$ denote the cofinality of this poset, i.e., 
$$\mathrm{cof}(\w^\k,\leq) \,=\, \min \set{|\F|}{\forall g \in \w^\k \, \exists f \in \F \ g \leq f}.$$
This poset and its cofinality were studied by Jech and Prikry in \cite{JP}, where they proved that if $2^{\aleph_0}$ is regular and $2^{\aleph_0} < 2^{\aleph_1}$, then $\mathrm{cof}(\w^{\w_1},\leq) = 2^{\aleph_0}$ implies there is an inner model with a measurable cardinal. 
It is also known that if $\k^{\aleph_0} = \k$ then $\mathrm{cof}(\w^\k,\leq) = 2^\k$. 
It is an open problem, and has been since the appearance of Jech and Prikry's paper in 1984, whether it is consistent to have $\mathrm{cof}(\w^\k,\leq) < 2^\k$ for any uncountable $\k$. 
The following theorem shows that finding a positive answer to Question~\ref{q:aa} is at least as hard as solving this problem of Jech and Prikry. 

\begin{theorem}
$\mathrm{cof}(\w^\k,\leq) \,\leq\, \mathrm{cof}(\nwd_{\k^\w})$.
\end{theorem}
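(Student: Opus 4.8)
The plan is to produce, from a small cofinal family in $\nwd_{\k^\w}$, a small family in $\w^\k$ that is cofinal in the pointwise-domination order. The idea is to read off, from a nowhere dense closed set $F$, a function $h_F : \k \to \w$ measuring ``how deep into the tree $\k^{<\w}$ one must go along each coordinate to escape $F$,'' and then show that if $\F$ is cofinal in $\nwd_{\k^\w}$ the resulting functions $\{h_F : F \in \F\}$ (or a mild modification of them) dominate every $g \in \w^\k$.

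Here are the steps in order. First, fix for each $\a < \k$ the clopen set $B_\a = \set{x \in \k^\w}{x(0) = \a}$, which is a copy of $\k^\w$. Given $g \in \w^\k$, build a single nowhere dense closed set $N_g \sub \k^\w$ that ``remembers'' $g$ in the following sense: inside each $B_\a$, place a closed nowhere dense set $N_g \cap B_\a$ which is chosen so that any basic open $\tr{s} \sub B_\a$ with $\mathrm{length}(s) \leq g(\a)$ meets $N_g$ — for instance, let $N_g \cap B_\a$ be the set of $x \in B_\a$ with $x(i) = 0$ for all $i$ with $1 \le i \le g(\a)$ (this is closed, and nowhere dense in $B_\a$ since one can always extend by a nonzero value at coordinate $g(\a)+1$). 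Taking $N_g = \bigcup_{\a<\k} (N_g \cap B_\a)$: this is closed (the $B_\a$ are clopen and form a discrete partition) and nowhere dense. Second, given a cofinal $\F \sub \nwd_{\k^\w}$, for each $F \in \F$ we may assume $F$ is closed; define $h_F(\a)$ to be the least $n$ such that some basic open $\tr{s} \sub B_\a$ with $\mathrm{length}(s) = n+1$ is disjoint from $F$ — equivalently, the least $n$ so that $F$ does not meet every basic clopen subset of $B_\a$ of ``height'' $n$. Since $F$ is nowhere dense in $B_\a$, such an $n$ exists, so $h_F \in \w^\k$. Third, given any $g \in \w^\k$, apply cofinality to $N_g$: there is $F \in \F$ with $F \supseteq N_g$. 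Then for each $\a$, since every basic open of height $\leq g(\a)$ inside $B_\a$ meets $N_g \sub F$, we get $h_F(\a) > g(\a)$, i.e. $g \le h_F$ coordinatewise. Hence $\{h_F : F \in \F\}$ is cofinal in $(\w^\k, \leq)$, giving $\mathrm{cof}(\w^\k,\leq) \leq |\F|$, and taking the infimum over cofinal $\F$ yields the claimed inequality.

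**The main thing to get right** is the precise bookkeeping between ``depth in the tree'' and the value of $g$, so that $N_g$ genuinely forces $h_F$ above $g$ — one must make sure $N_g \cap B_\a$ meets \emph{every} basic clopen subset of $B_\a$ up to the relevant height, not just one, which is exactly what the ``$x(i)=0$ for $1 \le i \le g(\a)$'' description secures (any $\tr{s}$ with $s$ of length $\le g(\a)+1$ extending $\langle\a\rangle$ can be continued by zeros to land in $N_g$). A second routine point is verifying that $N_g$ is nowhere dense in all of $\k^\w$ and not merely in each $B_\a$; this is immediate since the $B_\a$ partition $\k^\w$ into clopen pieces and a set meeting each piece in a nowhere dense set is nowhere dense in the whole. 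No serious obstacle is expected — the content is entirely in choosing the coding set $N_g$ so that the escape-depth function of any closed superset dominates $g$.
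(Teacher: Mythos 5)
Your overall strategy is the same as the paper's (code each $g\in\w^\k$ by a nowhere dense set, read an ``escape-depth'' function $h_F$ off each $F$, and use monotonicity of $F\mapsto h_F$), but the concrete coding set you chose does not work, and this is the heart of the argument. The set you define, $N_g\cap B_\a=\set{x\in B_\a}{x(i)=0 \text{ for } 1\le i\le g(\a)}$, is exactly the basic clopen set $\tr{\<\a\>\cat 0\cat\cdots\cat 0}$ (with $g(\a)$ zeros): it is a nonempty open set, not a nowhere dense one. Your justification fails because extending by a nonzero value at coordinate $g(\a)+1$ does not leave the set --- the set imposes no constraint at that coordinate. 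Consequently $N_g$ is open and nonempty, no nowhere dense $F$ can contain it, and the appeal to cofinality of $\F$ at the third step cannot even get started. The set also fails your other stated requirement: it does not meet every basic open of length $\le g(\a)$ in $B_\a$ (e.g.\ $\tr{\<\a,1\>}$ is disjoint from it when $g(\a)\ge 2$), so even if one ignored the density problem, a superset $F\supseteq N_g$ would not be forced to have $h_F(\a)>g(\a)$.

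The repair is to constrain the tail rather than the initial segment: take $N_g\cap B_\a=\set{x\in B_\a}{x(i)=0 \text{ for all } i>g(\a)}$. This set is closed, is nowhere dense in $B_\a$ (from any $\tr{s}$ one can extend by a nonzero entry at a coordinate beyond $g(\a)$ to escape it), and meets every $\tr{s}$ with $s(0)=\a$ and $\mathrm{length}(s)\le g(\a)+1$, since extending $s$ by zeros lands in it. With this $N_g$ your remaining steps are correct: any $F\supseteq N_g$ in a cofinal family satisfies $h_F(\a)>g(\a)$ for all $\a$, so $\set{h_F}{F\in\F}$ is cofinal in $(\w^\k,\le)$. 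This corrected set is precisely the paper's witness $K=\set{x\in\k^\w}{x(n)=0 \text{ if } n>f(x(0))}$ (the paper phrases the argument contrapositively, starting from a family of size $<\mathrm{cof}(\w^\k,\le)$, but the content is identical), so after the fix your proof is essentially the paper's.
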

\begin{proof}
To each $F \in \nwd_{\k^\w}$ we associate a function $g_F \in \w^\k$ as follows:
$$g_F(\a) \,=\, \min \set{\mathrm{length}(s)}{ s(0) = \a \text{ and } \tr{s} \cap F = \0}.$$
Observe that $g_F$ is well-defined because $F$ is nowhere dense, which means that there must be some $s$ with $\tr{s} \sub \tr{\<\a\>} \text{ and } \tr{s} \cap F = \0$. 
Also observe that if $F \sub E$ then $g_F \leq g_E$.

If $\F \sub \nwd_{\k^\w}$ and $|\F| < \mathrm{cof}(\w^\k,\leq)$, then $\set{g_F}{F \in \F}$ is not cofinal in $(\w^\k,\leq)$, and there is some $f \in \w^\k$ such that $f \not\leq g_F$ for all $F \in \F$. Let
$$K \,=\, \set{x \in \k^\w}{x(n) = 0 \text{ if } n > f(x(0))}.$$
This set is clearly nowhere dense, because if $\tr{s}$ is a basic (cl)open subset of $\k^\w$ then we may take $t = s \cat 1 \cat 1 \cat \cdots \text{($n$ times)} \cdots \cat 1$ for some $n > f(t(0))$, and then $\tr{t} \cap K = \0$. 
However, it is also clear that $g_K \geq f$ (in fact, $g_K = f$), because if $s(0) = \a$ and $\mathrm{length}(s) < f(\a)$, then
$$x(n) \,=\, \begin{cases}
s(n) &\text{ if } n \in \mathrm{dom}(s), \\
0 &\text{ if not}
\end{cases}$$
is a point of $\tr{s} \cap F$. Hence $g_K \not\leq g_F$ for any $F \in \F$. But recall that if $F \supseteq K$ then $g_K \leq g_F$; so $F \not\supseteq K$ for any $F \in \F$. In other words, $K$ witnesses that $\F$ is not cofinal in $\nwd_{\k^\w}$.
\end{proof}

\begin{question}
Is $\mathrm{cof}(\w^\k,\leq) \,\leq\, \mathrm{cof}(\M_{\k^\w})$ for every $\k$?
\end{question}

\section{Compact Hausdorff spaces}\label{sec:compacta}

In this final section we turn our attention briefly to the class of locally compact Hausdorff spaces. 
The main theorem is an upper bound for $\mathrm{non}(\M_X)$ reminiscent of Theorem~\ref{thm:MetUnif}.

Let us point out that when $X$ is a compact Hausdorff space, the invariant $\mathrm{cov}(\M_X)$ has been widely investigate already. 
If $\BB$ is the Boolean algebra of regular open subsets of $X$, then $\mathrm{cov}(\M_X) = \mathfrak{m}(\BB)$, the \emph{Martin number} of $\BB$. The Martin number of a Boolean algebra is very important in the theory of forcing. For example, Martin's Axiom $\mathsf{MA}_{\aleph_1}$ is equivalent to the assertion that $\mathfrak{m}(\BB) > \aleph_1$ for every ccc Boolean algebra $\BB$, or equivalently, that $\mathrm{cov}(\M_X) > \aleph_1$ for every compact Hausdorff $X$ with $c(X) = \aleph_0$. 
Similarly, the assertion $\mathfrak{p} = \continuum$ is equivalent to the assertion that $\mathfrak{m}(\BB) > \aleph_1$ for every $\s$-centered Boolean algebra $\BB$, or equivalently, that $\mathrm{cov}(\M_X) > \aleph_1$ for every separable compact Hausdorff space $X$ (see \cite{Bell}). 
Furthermore, $\mathrm{cov}(\M_X) = \mathrm{cov}(\nwd_X)$ for any compact Hausdorff space $X$, and the latter has also been studied from a more topological point of view. It is sometimes called the \emph{Nov\'ak number} of $X$; see, for example, \cite{BPS}, \cite{Juhasz}, or \cite{Vel}. 

Slightly less well known is the \emph{dual Martin number} of a Boolean algebra $\BB$, denoted $\mathfrak{n}\hspace{-1.1mm}\mathfrak{v}(\BB)$ in \cite{BJ}. This is the Stone dual of $\mathrm{non}(\M_X)$, in the same way that $\mathrm{cov}(\M_X)$ is the Stone dual of $\mathfrak{m}$. That is, if $\BB$ is the Boolean algebra of regular open subsets of $X$, then $\mathrm{non}(\M_X) = \mathfrak{n}\hspace{-1.1mm}\mathfrak{v}(\BB)$. 
A kind of ``opposite'' to Martin's axiom was proved by Merrill in \cite{M}: he showed that in the Bell-Kunen model described in \cite{BK}, $\mathrm{cov}(\M_X) < \continuum$ for every separable compact Hausdorff space $X$. 
The main result of this section adds to what is known about $\mathrm{non}(\M_X)$ when $X$ is (locally) compact Hausdorff, or dually, what is known about the invariant $\mathfrak{n}\hspace{-1.1mm}\mathfrak{v}(\BB)$. 

Recall that the \emph{$\pi$-weight} of a space $X$, denoted $\pi w(X)$, is the minimum cardinality of a $\pi$-base for $X$. Let 
$$\pi w_\downarrow(X) \,=\, \min\set{\k}{\text{some nonempty open $U \sub X$ has $\pi$-weight $\k$}}.$$
We say that $X$ has \emph{uniform $\pi$-weight $\k$} if $\pi w(X) = \pi w_\downarrow(X) = \k$, or, equivalently, if every nonempty open subset of $X$ has $\pi$-weight $\k$. 

Let us note that if $X$ is metric space then $w(X) = \pi w(X)$ (see \cite[Theorem 4.1.15]{Engelking}). With that in mind, the following lemma is analogous to Lemma~\ref{lem:kw}. 

\begin{lemma}\label{lem:kw2}
Let $X$ be a compact Hausdorff space with uniform $\pi$-weight $\k$. Then there is a (not necessarily continuous) function $F: \k^\w \to X$ such that if $Z$ is a non-meager subset of $\k^\w$, then $F[Z]$ is non-meager in $X$.
\end{lemma}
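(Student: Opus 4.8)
The plan is to build $F$ by a tree-indexed recursion, mimicking the combinatorial skeleton of the generalized Baire space inside $X$. Fix a $\pi$-base $\mathcal{B}$ for $X$ with $|\mathcal{B}| = \k$ (possible since $\pi w(X) = \k$). The goal is to assign to each $s \in \k^{<\w}$ a nonempty open set $W_s \sub X$ so that: (i) $W_\emptyset = X$; (ii) $\closure{W_{s\cat\alpha}} \sub W_s$ for every $\alpha < \k$; (iii) the sets $\{W_{s\cat\alpha} :\, \alpha < \k\}$ are pairwise disjoint and their union is dense in $W_s$; and (iv) for each $s$, the family $\{W_{s\cat\alpha} :\, \alpha < \k\}$ is ``$\pi$-base rich'' in the sense that every member of $\mathcal{B}$ contained in $W_s$ contains some $W_{s\cat\alpha}$. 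Clause (iv) is what forces $\k$-many successors (rather than fewer), and it is available precisely because $\pi w_\downarrow(W_s) = \k$ for every nonempty open $W_s$, so a maximal disjoint refinement of $\mathcal{B}\rest W_s$ has size exactly $\k$ — this is the analogue of Lemma~\ref{lem:CellularFamilies}, and I would prove the needed cardinality bound the same way, via Erd\H{o}s--Tarski together with $c(W_s) = \pi w(W_s) = \k$ (the latter equality holding for arbitrary spaces, not just metric ones, since cellularity is always bounded by $\pi$-weight and here every open subset has $\pi$-weight $\k$; a cellular family of size $\k$ refining $\mathcal{B}\rest W_s$ is produced directly). Then for $x \in \k^\w$ define $F(x)$ to be the unique point of $\bigcap_{n\in\w}\closure{W_{x\rest n}}$, which is a single point by compactness of $X$ and clause (ii) (a decreasing sequence of nonempty closed sets in a compact space has nonempty intersection, and a further argument — or just enlarging $\mathcal{B}$ to separate points — makes it a singleton; even if it is not a singleton, pick any point, since we need no continuity or injectivity).

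Now I would verify the transfer of non-meagerness. Suppose $Z \sub \k^\w$ and $F[Z]$ is meager in $X$; I want $Z$ meager in $\k^\w$. Fix closed nowhere dense $E_0, E_1, E_2, \dots \sub X$ with $F[Z] \sub \bigcup_n E_n$. For each $n$, pull back $E_n$ to the tree: say a node $s$ is ``$n$-good'' if $\closure{W_s} \cap E_n = \emptyset$. The key point is that for every $s \in \k^{<\w}$ there is an $n$-good extension $t \supseteq s$, and moreover one can find such a $t$ of the form $s \cat \alpha$ or, at worst, $s$ followed by boundedly many coordinates: since $E_n$ is nowhere dense in $X$, inside the open set $W_s$ there is a nonempty open set missing $\closure{E_n}$, hence (by clause (iv) and shrinking) some $W_{s\cat\alpha}$ whose closure misses $E_n$. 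This yields a function $g_n : \k^{<\w} \to \k^{<\w}$ with $\tr{g_n(s)} \sub \tr{s}$ in $\k^\w$ and $g_n$ ``landing in an $n$-good node.'' The set $\set{x \in \k^\w}{F(x) \notin E_n}$ then contains the comeager set $\bigcap$ of the dense open sets $\bigcup_s \tr{g_n(s)}$ — more precisely, $\set{x}{F(x) \in E_n}$ is nowhere dense in $\k^\w$ because $g_n$ certifies that every basic open set $\tr{s}$ has a basic open subset disjoint from it. Intersecting over $n$: $\set{x \in \k^\w}{F(x) \in \bigcup_n E_n}$ is meager in $\k^\w$. Since $Z$ is contained in this set, $Z$ is meager, which is the contrapositive of what we wanted.

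The main obstacle I anticipate is clause (iv) combined with (iii) at successor stages: I need, simultaneously, a pairwise-disjoint family of open sets whose union is dense in $W_s$, indexed by exactly $\k$ (so the tree really is $\k$-branching and $F$ can have domain $\k^\w$), refining the trace of $\mathcal{B}$ on $W_s$, and with the closures separated (clause (ii)). Separating closures is harmless in a Hausdorff space by shrinking each member slightly — but shrinking can destroy density of the union, so I must instead take a \emph{maximal} disjoint family of \emph{regular} open sets with closures inside $W_s$ refining $\mathcal{B}\rest W_s$; maximality gives density of the union, $\pi w = \k$ gives that the family has size exactly $\k$ (at most $\k$ since it refines $\mathcal{B}$, at least $\k$ since otherwise a leftover open set would contradict maximality in a space all of whose open sets have $\pi$-weight $\k$, hence cellularity $\k$). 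Getting these three requirements to coexist cleanly — and confirming that the resulting $g_n$'s genuinely witness nowhere-density of the pullback of $E_n$ in $\k^\w$, i.e.\ that ``$n$-good'' extensions are cofinal in the tree of \emph{nonempty} $W_s$'s — is the delicate part; everything else is the standard closing-off and Baire-category bookkeeping already used in Lemma~\ref{lem:UniformCovering} and Theorem~\ref{thm:UW}. I expect no appeal to Theorem~\ref{thm:Shelah} is needed for this lemma itself; pcf enters only afterwards, when Theorem~\ref{thm:MetUnif} is applied to $\k^\w$ to extract a small non-meager set to push forward through $F$.
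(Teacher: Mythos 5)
There is a genuine gap, and it lies exactly where you flagged the ``delicate part'': your construction needs each node of the tree to have exactly $\k$ successors, but you secure this by requiring the successors $\{W_{s\cat\a}\}$ to be \emph{pairwise disjoint} and then asserting that a maximal disjoint refinement has size $\k$ because ``cellularity $= \pi$-weight $= \k$'' for every nonempty open subset. That equality is a metric-space fact (it is what Lemma~\ref{lem:CellularFamilies} extracts from $c(X)=w(X)=\pi w(X)$ for metrizable $X$); it fails badly for compact Hausdorff spaces. For example, $\{0,1\}^{\w_1}$ is ccc and has uniform $\pi$-weight $\aleph_1$, and the Stone space of the measure algebra (the very space used later in the paper) is ccc with uncountable uniform $\pi$-weight. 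In such spaces every cellular family is countable, so your maximal disjoint family at each node has size $c(W_s)<\k$, the tree is at most $c(X)$-branching, and its branch space is $c(X)^\w$, not $\k^\w$ --- so no function with domain $\k^\w$ is produced. Worse, your clauses (iii) and (iv) are jointly unsatisfiable there: in $\{0,1\}^{\w_1}$ a fixed nonempty open set is contained in only finitely many basic clopen sets, so a family satisfying (iv) must have $\aleph_1$ members, while (iii) caps it at $\aleph_0$. The ``leftover open set contradicts maximality'' argument does not rescue this: maximality only yields density of the union, and says nothing about the family's cardinality.

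The repair is to drop disjointness (and density of the union of successors) altogether, since your verification in the second paragraph never uses clause (iii): it only needs $\closure{W_{s\cat\a}}\sub W_s$, $\k$-many successors, and the richness property that inside $W_s$ one can always find a successor whose closure avoids a given nowhere dense set. This is achieved by letting the successors of a node ending in $W$ be \emph{all} $V\in\B$ with $\closure{V}\sub W$ (no refinement, no disjointness): uniform $\pi$-weight $\k$ guarantees this collection is a $\pi$-base for $W$ of size exactly $\k$, so the tree is isomorphic to $\k^{<\w}$, and regularity of the compact Hausdorff space supplies, for any dense open $V\sub X$ and any node, a successor with closure inside $V$. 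That is precisely the paper's construction; the rest of your argument (pick $F(x)\in\bigcap_n\closure{W_{x\rest n}}$ by compactness, show the preimage of each dense open set contains a dense open set, conclude by contraposition) then goes through, and you are right that Shelah's theorem plays no role in this lemma.
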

\begin{proof}
Let $\B$ be a $\pi$-base for $X$ with $|\B| = \k$. 

Let us say that a sequence $\< U_0,U_1,\dots,U_\ell \>$ of sets in $\B$ is \emph{strongly decreasing} if $U_j \supseteq \closure{U_{j+1}}$ for all $j < \ell$.
Let $\mathcal T$ be the collection of all strongly decreasing finite sequences of members of $\B$. 
Given $s,t \in \mathcal T$, define $s \leq t$ to mean $s = t \rest (\mathrm{length}(s))$ (i.e., $t$ is an end-extension of $s$). 
This ordering makes $\mathcal T$ into a tree.

We claim that each $s = \< U_0,U_1,\dots,U_\ell \> \in \mathcal T$ has exactly $\k$ successors in $\mathcal T$. To see this, note that 
$$\E_s \,=\, \set{V \in \B}{\closure{V} \sub U_\ell}$$
is a $\pi$-base for $U_\ell$. Because $\pi w_\downarrow(X) = \k$, this means $|\E_s| \geq \k$, and clearly $|\E_s| \leq |\B| = \k$. Thus $|\E_s| = \k$. But $\E_s$ is precisely the set of all $V$ such that $s \cat V = \< U_0,U_1,\dots,U_\ell,V \> \in \mathcal T$, so $s$ has $\k$ successors in $\mathcal T$. 

Thus $\mathcal T$ is a subtree of $\B^{<\w}$ such that every $s \in \mathcal T$ has exactly $\k$ successors in $\mathcal T$. 
This characterizes $\mathcal T$ up to order-isomorphism: it is isomorphic to $\k^{<\w}$. 

A \emph{branch} through $\mathcal T$ is a function $b: \w \to \B$ such that $b \rest n \in \mathcal T$ for every $n$. Let $B$ denote the set of all branches through $\mathcal T$. Observe that $B$ has a natural topology, with a basis of sets of the form
$$\tr{s} \,=\, \set{b}{b \text{ is a branch through $\mathcal T$ and } b \rest (\mathrm{length}(s)) = s}.$$
Because the tree $\mathcal T$ is order-isomorphic to the tree $\k^{<\w}$, it is clear that $B$, endowed with this topology, is homeomorphic to $\k^\w$. To prove the lemma, it remains find a function $F: B \to X$ such that if $Z$ is a non-meager subset of $B$, then $F[Z]$ is non-meager in $X$. 

For each $b \in B$ we have an infinite sequence $b(0),b(1),b(2),\dots$ of members of $\B$ such that $b(n) \supseteq \closure{b(n+1)}$ for all $n$. Because $X$ is compact, this implies $\bigcap_{n \in \w} b(n) = \bigcap_{n \in \w} \closure{b(n+1)} \neq \0$. Let $F: B \to X$ be a choice function mapping each $b \in B$ to some (any) element of $\bigcap_{n \in \w} b(n)$.

We claim that if $V$ is an open dense subset of $X$, then $F^{-1}(V)$ contains an open dense subset of $B$.
To see this, 
fix some $s = \< U_0,U_1,\dots,U_\ell \> \in \mathcal T$. Because $V$ is open and dense in $X$, $V \cap U_\ell$ is a nonempty open subset of $X$. Because $\B$ is a $\pi$-base for $X$, there is some $W \in \B$ such that $\closure{W} \sub U_\ell \cap V$. Because $\closure{W} \sub U_\ell$, we have $s \cat W \in \mathcal T$. Because $W \sub V$, if $b \in \tr{s \cat W}$ then $F(b) \in b(\ell+1) = W \sub V$. 
Hence $\tr{s \cat W} \sub F^{-1}(V)$. 
Thus $\tr{s \cat W}$ is a basic (cl)open subset of $B$ such that $\tr{s \cat W} \sub \tr{s} \cap F^{-1}(V)$. 
As $s$ was arbitrary, this shows $F^{-1}(V)$ contains an open dense set.

To finish the proof, suppose $Z \sub B$ and $F[Z]$ is meager in $X$. 
This means there is a sequence $V_0,V_1,V_2,\dots$ of dense open subsets of $X$ such that $F[Z] \cap \bigcap_{n \in \w}V_n = \0$. Pulling things back through $F$, this implies $Z \cap \bigcap_{n \in \w}F^{-1}(V_n) = \0$. By the previous paragraph, $\bigcap_{n \in \w}F^{-1}(V_n)$ is a co-meager subset of $B$. Thus $Z$ is meager. 
Thus, by contraposition, if $Z$ is non-meager in $B$ then $F[Z]$ is non-meager in $X$.
\end{proof}

\begin{theorem}
Let $X$ be a locally compact Hausdorff space, and let $\k = \pi w_\downarrow(X)$. Then $\mathrm{non}(\mathcal M_X) \leq \cf[\k]^\w \cdot \non M$.
\end{theorem}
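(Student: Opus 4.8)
The plan is to reduce the locally compact case to the compact case, and then apply Lemma~\ref{lem:kw2} together with Theorem~\ref{thm:UW} (the characterization of $\mathrm{non}(\M_{\k^\w})$). First I would observe that since $X$ is locally compact Hausdorff with $\pi w_\downarrow(X) = \k$, there is a nonempty open set $U \sub X$ whose closure $K = \closure{U}$ is compact, and by shrinking we may assume $\pi w(U) = \k$. It suffices to produce a non-meager subset of $X$ of size $\cf[\k]^\w \cdot \non M$, and since $U$ is open, any subset of $U$ that is non-meager in $U$ is non-meager in $X$; so it is enough to work inside $U$, or equivalently inside the compact space $K$, noting that $U$ is open and dense in $K$, so non-meager-in-$U$ implies non-meager-in-$K$ implies non-meager-in-$X$. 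The slight subtlety here is that $\pi w_\downarrow(K)$ might in principle be smaller than $\k$ (if $K$ has a nonempty open set of smaller $\pi$-weight disjoint from $U$), but that only helps: a smaller value of $\k$ makes the bound $\cf[\k]^\w \cdot \non M$ smaller, so it is harmless to just pass to a nonempty open $V \sub U$ with $\pi w(V) = \pi w_\downarrow(V)$ and work with $\closure{V}$. Actually the cleanest route is: fix $U$ open with $\closure{U}$ compact and $\pi w_\downarrow(\closure{U}) = \k'$ for some $\k' \leq \k$ — in fact $\k' = \k$ can be arranged by choosing $U$ with uniform $\pi$-weight $\k$, which is possible because the family of open sets with $\pi w_\downarrow = \pi w$ is a $\pi$-base — and then replace $X$ by the compact Hausdorff space $\closure{U}$ and prove the bound there.

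Once we are in the compact Hausdorff setting with uniform $\pi$-weight $\k$, Lemma~\ref{lem:kw2} gives a function $F \colon \k^\w \to X$ such that $F[Z]$ is non-meager in $X$ whenever $Z$ is non-meager in $\k^\w$. By Theorem~\ref{thm:UW}, $\k^\w$ has a non-meager subset $Z$ with $|Z| = \non M \cdot \cf[\k]^\w$. Then $F[Z]$ is non-meager in $X$ and $|F[Z]| \leq |Z| = \non M \cdot \cf[\k]^\w$. Hence $\mathrm{non}(\M_X) \leq \cf[\k]^\w \cdot \non M$, as desired.

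The only real work, therefore, is the reduction in the first paragraph: making sure we may assume $X$ is compact with uniform $\pi$-weight exactly $\k$, rather than merely locally compact with $\pi w_\downarrow(X) = \k$. This is where I expect the main (though still routine) obstacle to lie — one has to check that the open sets $U$ with $\closure{U}$ compact form a $\pi$-base (immediate from local compactness and regularity of $X$), that among these one can find one with uniform $\pi$-weight equal to $w_\downarrow$ of that neighborhood (using that $\pi w_\downarrow(U) \geq \pi w_\downarrow(X) = \k$ for every nonempty open $U$, together with a Zorn/cellular-family argument exactly as in Lemma~\ref{lem:CellularFamilies}), and that shrinking can only decrease $\cf[\k]^\w$. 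Everything after that reduction is a two-line application of the already-proved lemma and theorem.
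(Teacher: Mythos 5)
Your proposal is correct and follows essentially the same route as the paper's own proof: pass to a nonempty open $U$ of uniform $\pi$-weight $\k$ with $\closure{U}$ compact (using local compactness), apply Lemma~\ref{lem:kw2} to get $F:\k^\w\to\closure{U}$, push forward a non-meager $Z\sub\k^\w$ of size $\non M\cdot\cf[\k]^\w$ from Theorem~\ref{thm:UW}, and use that $U$ is open in $X$ and dense in $\closure{U}$ to conclude non-meagerness in $X$. The only difference is cosmetic: your worry that $\pi w_\downarrow(\closure{U})$ might drop below $\k$ cannot actually occur (every nonempty open subset of $\closure{U}$ meets the dense set $U$), though your monotonicity fallback for $\cf[\cdot]^\w$ would handle it anyway.
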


\begin{proof}
If $\k = \pi w_\downarrow(X)$ then there is a nonempty open $U \sub X$ with uniform $\pi$-weight $\k$. 
Using the fact that $X$ is locally compact, and passing to a subspace of $U$ if necessary, we may (and do) assume $\closure{U}$ is compact. 
By the previous lemma, there is a function $F: \k^\w \to \closure{U}$ such that if $Z$ is meager in $\k^\w$ then $F[Z]$ is meager in $\closure{U}$. By Theorem~\ref{thm:UW}, there is a non-meager $Z \sub \k^\w$ with $|Z| = \non M \cdot \cf[\k]^\w$. 
So $F[Z]$ is a non-meager subset of $\closure{U}$ with $|Z| = \non M \cdot \cf[\k]^\w$. 
Because $U$ is open in $X$, $F[Z]$ is also non-meager in $X$, so this shows $\mathrm{non}(\M_X) \leq \non M \cdot \cf[\k]^\w$.
\end{proof}

Because $w(X) = \pi w(X)$ and $w_\downarrow(X) = \pi w_\downarrow(X)$ whenever $X$ is a metric space (see \cite[Theorem 4.1.15]{Engelking}), this theorem essentially asserts the same thing as Theorem~\ref{thm:MetUnif}, but for locally compact spaces, and with the equality weakened to an inequality. It is worth pointing out that, as opposed to metric spaces, with compact spaces full equality cannot be established. 

\begin{theorem}
It is consistent with \zfc that there is a compact Hausdorff space $X$ with uniform $\pi$-weight $\k$ such that $\mathrm{non}(\mathcal M_X) < \cf[\k]^\w \cdot \non M$.
\end{theorem}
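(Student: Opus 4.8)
The plan is to exhibit a compact Hausdorff space $X$ of uniform $\pi$-weight $\k$ for which $\non M_X$ is strictly smaller than the general upper bound $\cf[\k]^\w \cdot \non M$. The natural candidate is a space built as a product or a Stone space of a Boolean algebra that has a small dense subset — recall that $\non M_X = \mathfrak{n}\mathfrak{v}(\BB)$ for $\BB$ the algebra of regular open sets of $X$, so we want an algebra whose Stone space admits a small non-meager set. The cleanest concrete choice is $X = 2^\k$ (the Cantor cube of weight $\k$), whose $\pi$-weight is $\k$ when $\k \geq \aleph_0$ and which has uniform $\pi$-weight $\k$. A non-meager subset of $2^\k$ of size $< \cf[\k]^\w$ would already do the job for appropriate $\k$, so the first step is to determine, in a suitable model, how small a non-meager subset of $2^\k$ can be.

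First I would recall/establish that for $2^\k$, a set $Z$ fails to be meager iff it is not covered by countably many nowhere dense sets; and that nowhere dense closed sets in $2^\k$ are controlled by countable ``supports'' in a useful sense only when $\k$ has uncountable cofinality — here we deliberately want $\k$ with $\cf(\k) = \w$, say $\k = \aleph_\w$ or $\k = \beth_\w$, so that $\cf[\k]^\w \geq \k^+$ is comparatively large. Next I would invoke a standard forcing construction: add a generic non-meager set of prescribed small size to $2^\k$. Concretely, one forces with finite partial functions to build $\aleph_1$-many (or $\non M$-many in the ground model) generic branches through the tree $2^{<\k}$ in a way that diagonalizes against all ground-model meager $F_\s$ sets, while a chain-condition or properness argument shows no new meager sets covering this collection are added. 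The upshot is a model where $2^\k$ has a non-meager set of size $\aleph_1$ (or $\non M$), which is $< \cf[\k]^\w$ once $\cf(\k) = \w$ forces $\cf[\k]^\w \geq \k^+ > \aleph_1$.

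An alternative, possibly cleaner route avoids bespoke forcing: use the Bell–Kunen-style or Merrill-style model referenced in the excerpt (the paragraph on the dual Martin number mentions Merrill's result that in the Bell–Kunen model $\cov M_X < \continuum$ for separable compact $X$), and dualize. Since separable compact spaces have countable $\pi$-weight-in-a-dense-subset issues, one instead wants a space that is an inverse limit of length $\k$ of Cantor-cube-like pieces but with a dense set of size $\non M$ sitting cofinally; the Stone space of the corresponding algebra then has $\mathfrak{n}\mathfrak{v}(\BB)$ small. I would check that such an algebra has uniform $\pi$-weight exactly $\k$ (every principal restriction still needs $\k$ generators for a $\pi$-base — this is automatic for $2^\k$ and its clopen restrictions) and that the small dense/non-meager witness survives.

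The main obstacle will be the non-meagerness preservation: producing a set $Z \sub 2^\k$ of size $< \cf[\k]^\w$ and proving $Z \notin \M_{2^\k}$ in the extension requires showing that \emph{no} countable family of nowhere dense subsets — including ones coded by new reals/new subsets of $\k$ — covers $Z$. For $\k$ of cofinality $\w$ this is genuinely delicate, because a nowhere dense closed set in $2^\k$ can ``use'' a cofinal $\w$-sequence of coordinates, exactly the phenomenon that makes $\cf[\k]^\w$ large in the first place; so the forcing must diagonalize against all such sets, which is why a careful support/fusion argument (or a direct appeal to a known model) is needed rather than a one-line product argument. I expect the cleanest writeup fixes $\k$ with $\cf(\k)=\w$, takes $X = 2^\k$, and either (a) cites an existing model in which $\mathfrak{n}\mathfrak{v}$ of the measure algebra / Cohen algebra on $\k$ is $\aleph_1$, or (b) does the finite-condition forcing explicitly with a $\sigma$-centered-in-each-coordinate chain condition to preserve the constructed non-meager set while keeping $\cf[\k]^\w > \aleph_1$ by not collapsing $\k^+$.
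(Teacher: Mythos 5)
Your plan has a genuine gap at its central step, and it also aims at a much harder target than the theorem requires. With $X = 2^\k$ the route through the $\non M$ factor is closed in ZFC: for any countably infinite $A \sub \k$ the projection $\pi_A : 2^\k \to 2^A$ is continuous and open, so if $|Z| < \non M$ then $\pi_A[Z]$ is meager in $2^A \homeo 2^\w$ and $Z \sub \pi_A^{-1}(\pi_A[Z])$ is meager; hence $\mathrm{non}(\M_{2^\k}) \geq \non M$. So for your space you are forced to beat the other factor, i.e.\ to prove the consistency of $\mathrm{non}(\M_{2^\k}) < \cf[\k]^\w$ for some $\k$ of countable cofinality, and this is exactly the step you defer (``the main obstacle will be the non-meagerness preservation''). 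The deferral is not innocent: the classical argument that $\aleph_1$ Cohen reals form a non-meager subset of $2^\w$ works because every meager Borel subset of $2^\w$ is coded by a real, which by ccc-ness lives in an intermediate model indexed by countably many coordinates; in $2^\k$ with $\k$ uncountable a meager set is coded by countably many dense open sets whose supports may have size $\k$, so there is no countable-support absorption, and ``a chain-condition or properness argument shows no new meager sets covering this collection are added'' is an assertion, not an argument. (There are further ZFC constraints you never address, e.g.\ a non-meager $Z \sub 2^\k$ must satisfy $2^{|Z|} \geq \k$, since otherwise infinitely many coordinates are indistinguishable to all members of $Z$, which puts $Z$ inside the nowhere dense set of functions constant on an infinite set.) Neither your forcing sketch nor the ``dualized Bell--Kunen/Merrill'' alternative fills this gap: Merrill's result concerns $\mathrm{cov}(\M_X)$, not $\mathrm{non}(\M_X)$, and the proposed inverse-limit algebra is never actually defined, so nothing can be verified.

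The paper's proof shows how little is actually needed: since $\non M$ appears as a factor on the right-hand side, it suffices to exhibit a compact space of uniform $\pi$-weight with $\mathrm{non}(\M_X) < \non M$, and for this one should use a measure-like algebra rather than a Cohen-like one. Concretely, let $X$ be the Stone space of the measure algebra of $\R$; by Theorem 3.1.8 of \cite{BJ}, $\mathrm{non}(\M_X) = \non L$, and $\non L < \non M$ holds, e.g., in the random real model, so in that model $\mathrm{non}(\M_X) < \non M \leq \cf[\k]^\w \cdot \non M$ regardless of the value of $\k$. This is a two-line argument with no new forcing and no computation of $\cf[\k]^\w$. If you want to salvage your approach, replace $2^\k$ by a measure-type algebra; as it stands, your proposal reduces the theorem to a question (the consistency of $\mathrm{non}(\M_{2^\k}) < \cf[\k]^\w$) that is harder than the theorem itself and that your sketch does not resolve.
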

\begin{proof}
In \cite[Theorem 3.1.8]{BJ} it is shown that if $\BB$ is the measure algebra on $\R$ then $\mathfrak{n}\hspace{-1.1mm}\mathfrak{v}(\BB) = \non L$ (the least size of a non-Lebesgue-measurable subset of $\R$). 
Equivalently, if $X$ is the Stone dual of the measure algebra then $\mathrm{non}(\M_X) = \non L$. 
Consequently, if $\non L < \non M$ then $\mathrm{non}(\M_X) < \cf[\k]^\w \cdot \non M$. 
The consistency of $\non L < \non M$ is well known (for example, it holds in the random model). 
\end{proof}

\begin{question}
Is it consistent with \zfc that $\mathrm{non}(\mathcal M_X) = \cf[\k]^\w \cdot \non M$ for every compact Hausdorff space $X$ with uniform $\pi$-weight $\k$?
\end{question}


\begin{thebibliography}{99}

\bibitem{BPS} B. Balcar, J. Pelant, and P. Simon, ``The space of ultrafilters on $N$ covered by nowhere dense sets,'' \emph{Fundamenta Mathematicae} \textbf{110} (1980), pp. 11--24.
\bibitem{BJ} T. Bartoszy\'nski and H. Judah, \emph{Set Theory: On the Structure of the Real Line}, A K Peters (1995).
\bibitem{BT} J. Baumgartner and A. Taylor, ``Saturation properties of ideals in generic extensions. I," \emph{Transactions of the American Mathematics Society} \textbf{270} (1982), pp. 557--574. 
\bibitem{Bell} M. Bell, ``On the combinatorial principle $P(\continuum)$,'' \emph{Fundamenta Mathematicae} \textbf{114} (1981), pp. 149--157.
\bibitem{BK} M. Bell and K. Kunen, ``On the PI character of ultrafilters,'' \emph{La Soci\'et\'e Royale du Canada, L'Acad\'emie des Sciences, Comptes Rendues Math\'ematiques (Mathematical Reports)} \textbf{3} (1981), pp. 351--356. 
\bibitem{Brian} W. Brian, ``The Borel partition spectrum at successors of singular cardinals,'' to appear in \emph{Proceedings of the American Mathematical Society}. 
\bibitem{Engelking} R. Engelking, \emph{General Topology}. Sigma Series in Pure Mathematics, 6, Heldermann, Berlin (revised edition), 1989.
\bibitem{ET} P. Erd\H{o}s and A. Tarski, ``On families of mutually exclusive sets,'' \emph{Annals of Mathematics (2)} \textbf{44} (1943), pp. 315--329.
\bibitem{Fremlin} D. Fremlin, ``The partially ordered sets of measure theory and Tukey's ordering,'' \emph{Note di Matematica} \textbf{11} (1991), pp. 177--214.
\bibitem{Jech1} T. Jech, ``The closed unbounded filter over $P_\k(\lambda)$,'' \emph{Notices of the American Mathematical Society} \textbf{18} (1971), p. 663, abstract. 
\bibitem{Jech2} T. Jech, ``Some combinatorial problems concerning uncountable cardinals,'' \emph{Annals of Mathematical Logic} \textbf{5} (1973), pp. 165--198.
\bibitem{JP} T. Jech and K. Prikry, ``Cofinality of the partial ordering of functions from $\Omega_1$ into $\Omega$ under eventual domination,'' \emph{Mathematical Proceedings of the Cambridge Philosophical Society} \textbf{95} (1984), pp. 25--32.
\bibitem{Juhasz} I. Juh\'asz and W. Weiss, ``Nowhere dense choices and $\pi$-weight,'' \emph{Annales Mathematicae Silesianae} \textbf{2} (1986), pp. 85-91.
\bibitem{JMPS} W. Just, A. R. D. Mathias, K. Prikry, and P. Simon, ``On the existence of large $p$-ideals,'' \emph{Journal of Symbolic Logic} \textbf{55} (1990), pp. 457--465.
\bibitem{Kechris} A. Kechris, Classical Descriptive Set Theory, Graduate Texts in Mathematics, vol.
156, Springer-Verlag, 1995.
\bibitem{Matet} P. Matet, ``Large cardinals and covering numbers,'' \emph{Fundamenta Mathematicae} \textbf{205} (2009), pp. 45--75.
\bibitem{M} J. Merrill, ``A class of consistent anti-Martin's axioms,'' \emph{Pacific Journal of Mathematics} \textbf{143} (1990), pp. 301--312.
\bibitem{Shelah1} S. Shelah, ``Advances in cardinal arithmetic," In: Sauer, N.W., Woodrow, R.E., Sands, B. (eds.) \emph{Finite and Infinite Combinatorics in Sets and Logic} (1993). NATO ASI Series, vol 411. Springer, Dordrecht.
\bibitem{Shelah2} S. Shelah, \emph{Cardinal Arithmetic}, Oxford Logic Guides, 29, Oxford University Press, New York, 1994.
\bibitem{Shioya} M. Shioya, ``A proof of Shelah's strong covering theorem for $\mathcal P_\k \lambda$," \emph{Asian Journal of Mathematics} \textbf{12} (2008), pp. 83--98.
\bibitem{Stone} A. H. Stone, ``Non-separable Borel sets,'' \emph{Dissertationes Mathematicae} \textbf{28} (1962), pp. 1--40.
\bibitem{Vel} B. Veli\v{c}kovi\'c, ``Jensen's $\square$ principles and the Nov\'ak number of partially ordered sets,'' \emph{Journal of Symbolic Logic} \textbf{51} (1986), pp. 47--58.

\end{thebibliography}
\end{document}